\newtheorem{theorem}{Theorem}
\newtheorem{definition}[theorem]{Definition}
\newtheorem{proposition}[theorem]{Proposition}
\newtheorem{lemma}[theorem]{Lemma}
\newtheorem{remark}[theorem]{Remark}
\def\supp{\mathop{\rm supp}\nolimits}
\def\leb{\mathop{\rm Leb}\nolimits}
\begin{document}
\title{On generic $G$-prevalent properties of $C^{r}$ diffeomorphisms of $\mathbf{S}^{1}$ and a quantitative K-S theorem}
\author{Artur O. Lopes
\footnote{Instituto de Matem\'atica, UFRGS, 91509-900 Porto Alegre, Brasil. Partially supported by CNPq, PRONEX -- Sistemas
Din\^amicos, INCT, and beneficiary of CAPES financial support.} \,\,and Elismar R. Oliveira\footnote{Instituto de Matem\'atica, UFRGS, 91509-900 Porto Alegre, Brasil. Partially supported by CNPq.}\,}
\date{\today}
\maketitle

\begin{abstract}

We will consider a convex unbounded set and certain group  of actions $G$ on this set. This will substitute the translation (by adding) structure usually consider in the classical setting of prevalence. In this way  we will be able to define the meaning of $G$-prevalent set.

In this setting we will show a kind of quantitative Kupka-Smale Theorem and also a result about rotation numbers which was first consider by J.-C. Yoccoz (and, also by M. Tsujii).

\end{abstract}


\section{Introduction} In 1992 B. Hunt, T. Sauer and J. Yorke  \cite{HSY}  introduced the idea of generic prevalent sets for a space of functions $V$ (a complete metric vector space). This concept produces a meaning for saying that a certain property is true for a  large set of functions (its complementary  being small) in the measure theoretical sense. In the next  years this idea was expanded in several different  directions and  settings \cite{KalY1}, \cite{KalY2}, \cite{KalH1}, \cite{KalH2}, \cite{OY}, \cite{SHY}, and \cite{TS1}. The main goal was to introduce the idea of \textbf{shy} sets, that are the equivalent of Christensen \cite{CRH1}, Haar zero measure sets. These will play the role of the small sets. A set will be called \textbf{prevalent} if its the complement of a \textbf{shy} set. A set (of functions, vectors, etc, that has a certain property) will be said \textbf{generic}, if it contains some \textbf{prevalent} subset.

We will consider a convex unbounded set and certain group  of action of a group $G$ on this set. This will substitute the translation (by adding) structure usually considered in the classical setting of prevalence. In this way  we will be able to define the meaning of $G$-prevalent set.

We need to elaborate a little further before explaining what we precisely mean.
A measure $\mu$ over the Borel sigma algebra which is supported in a compact set is said  to be transverse to a Borel set $S$, if $0 < \mu(U) < \infty$, for some compact set $U$, and $\mu(S + x)=0$ for all $x \in V$.
In \cite{OY}, 2005, a prevalence generic theory is characterized as satisfying 5 fundamental axioms:
\\
\textbf{Axiom 1-} A generic subset of $V$ is dense;\\
\textbf{Axiom 2-} If $P \subset Q$ and $P$ is generic then $Q$ is generic;\\
\textbf{Axiom 3-} A countable intersection of generic subset of $V$ is generic;\\
\textbf{Axiom 4-} Every translate of a generic subset is generic;\\
\textbf{Axiom 5-} A subset $P\subset \mathbb{R}^n$ is generic, if and only if, $P$ has full Lebesgue measure.\\

For prevalence in nonlinear spaces  M. Tsujii \cite{TS1} has considered translation quasi-invariant measures from $C^{r}$ sections of vector bundles in a compact $m$-dimensional smooth manifold, getting prevalent generic properties like Thom's transversality theorem. A particular setting is considered in that nice paper, but it's not clear  how to extend the construction to other families of maps and other families of measures. V. Yu. Kaloshin in \cite{KalY1}, \cite{KalY2} considered a hybrid idea of prevalence in manifolds, using topologically generic one-parametric families that has full probability on their parameter as being his approach to prevalence. This is quite interesting but it's a little bit different from the original idea of \cite{HSY}, since the prevalence should be related to a translation invariant measure. On the other hand, V. Yu. Kaloshin and B. Hunt, in \cite{KalH1}, \cite{KalH2} proved prevalent bounds for  the rate of growth of periodic orbits of diffeomorphisms in a smooth compact manifold $M$, by embedding $M$ in $\mathbb{R}^{N}$, and making perturbations of diffeomorphisms in $C^{r}(U,\mathbb{R}^{N})$, where $U$ is tubular neighborhood of $M$ in $\mathbb{R}^{N}$.

One of the main problems when considering prevalence on the space of diffeomorphisms  is that several  interesting subsets are shy, so we need an intrinsical theory of prevalence for a family of diffeomorphisms that has some additional convex structure, as, for instance, the liftings of $C^{r}$ diffeomorphisms of $\mathbf{S}^{1}$.  In this work we will introduce a different approach, replacing the group of translations (as HSY in \cite{HSY} used in the original work) by a convenient group of reflections that acts transitively on these family. We believe that our result apply directly either to the torus $\mathbb{T}^{n}$ or twist maps, without  much work.

\section{Liftings of $C^{r}$ diffeomorphisms of $\mathbf{S}^{1}$:} \label{Liftings}

Following the notation of \cite{He}, we denote $Diff^{r}_{+}(\mathbf{S}^{1})$   the set of all $C^{r}$, preserving order, diffeomorphisms of $\mathbf{S}^{1}$, for $r \geq 1$ ($Diff^{0}_{+}(\mathbf{S}^{1})$ means the set of all increasing homeomorphisms of $\mathbf{S}^{1}$). As $\mathbf{S}^{1}= \mathbb{R}/\mathbb{Z}$, we can consider the set of all liftings of order preserving diffeomorphisms, to the universal covering $\mathbb{R}$:
$$\mathcal{H}^{+}_{0}=\{F \in Diff^{r}_{+}(\mathbb{R}) \, | \, F(x+1)= F(x) + 1, \, \forall n \in \mathbb{N}, F'(x) >0 \}.$$

This set is a representative subset of the set of all $C^{r}$ maps commuting with the covering map, of degree 1:
$$\mathcal{H}^{+}=\{F \in C^{r}(\mathbb{R}) \, | \, F(x+n)= F(x) + n, \, \forall n \in \mathbb{N} \}.$$

We can also define the set of liftings of order reversing diffeomorphisms $\mathcal{H}^{-}$. As  $\mathcal{H}^{-} \cup \mathcal{H}^{+}$ is not connected by isotopy, we will restrict  ourselves, w.l.o.g., to $\mathcal{H}= \mathcal{H}^{+}$ (one can get similar results for  $\mathcal{H}^{-}$), unless we mention that. Thus, from now on, we will denote $Diff^{r}$ instead $Diff^{r}_{+}$. Also we define, $\mathcal{V}$ be the linear topological space of all $C^{r}$ function (diffeomorphisms) of $\mathbb{R}$ with the Whitney topology, so $\mathcal{H}$ is an infinite dimensional convex unbounded subset of $\mathcal{V}$. More than that, the topology induced by $\mathcal{V}$ on $\mathcal{H}$ agree with the uniform topology induced by the complete metric on $Diff^{r}(\mathbf{S}^{1})$ given by,
$$d(F,G)= \sum _{i=0}^{n}\sup_{x \in \mathbf{S}^{1}} |F^{(i)}(x) -G^{(i)}(x)|.$$

Using the natural structure of $\mathcal{V}$ as complete metric linear space we conclude from the standard prevalent theory that $\mathcal{H}$ is shy. So, for a generic $C^{r}$ function (diffeomorphism)  of $\mathbb{R}$, $F(x+n) \neq F(x) + n$, this means that $F$ is not a lifting of any diffeomorphism (or homeomorphism) of $\mathbf{S}^{1}$. Of course, this is not the right direction of reasoning. Thus, we need to introduce some intrinsical prevalent theory for $\mathcal{H}$ in order to get typical behaviors of $\mathbf{S}^{1}$ diffeomorphisms. This is the purpose of the next two sections.

We would like to observe first that $\mathcal{H}_{0}$ is not in 1-1 correspondence with the diffeomorphisms (or homeomorphisms) of $\mathbf{S}^{1}$. This happens because two liftings of the same diffeomorphism differ by an integer constant. In other words, if $\pi : \mathcal{H}_{0} \to Diff^{r}(\mathbf{S}^{1})$ is  given by $$\pi(F)=F\,(\text{mod}\, 1) =f,$$ (the canonical projection) then, $\pi(F)=\pi(G)$, implies $F=G + k, \, k \in \mathbb{Z}$. So defining the relation $F \sim G$ in $\mathcal{H}_{0}$, iff, $F=G + k, \, k \in \mathbb{Z}$, we get that $\mathcal{H}_{0}/\sim$ is a module over $\mathbb{Z}$, with the natural operations in the equivalence classes, and, the induced map $\overline{\pi}$ is an 1-1 correspondence between $\mathcal{H}_{0}/\sim$, and the set of all preserving order diffeomorphisms (or homeomorphisms) of $\mathbf{S}^{1}$.

\begin{remark} \label{PathFoliation}
Another important issue is the geometry   of $\pi (\mathcal{H}_{0})$. If we take $F, F + k, \, k \in \mathbb{Z}$ in $\mathcal{H}_{0}$, the line $\{ (1-\lambda) F + \lambda (F+k) \, | \, \lambda \in \mathbb{R}  \} $ projects on $$\pi\{ (1-\lambda) F + \lambda (F+k) \, | \, \lambda \in \mathbb{R}  \}=\{ (F +\sigma)\,(\text{mod}\, 1) \, | \, \sigma=(\lambda-[\lambda]) \in \mathbf{S}^{1}\}, $$
that is, this line projects on a closed path in $Diff^{r}(\mathbf{S}^{1})$.

On the other hand, if we take  arbitrary $F, G$ in $\mathcal{H}_{0}$, then the line $\{ (1-\lambda) F + \lambda G \, | \, \lambda \in \mathbb{R}  \} $ projects on $$\pi\{ (1-\lambda) F + \lambda G \, | \, \lambda \in \mathbb{R}  \},$$ which is, in general, a noncompact path not necessarily in $Diff^{r}(\mathbf{S}^{1})$.

For example, take $F(x)= x+ 0.2 \pi$ and $G(x)= x + 0.2 + \frac{1}{2.1\pi} \sin(2\pi x)$, we get,
$$F_{\lambda}=((1-\lambda) F + \lambda G)(x) = x + 0.2 (\pi + \lambda (1-\pi)) +\frac{\lambda}{2.1\pi} sin(2\pi x).$$
\begin{center}
\includegraphics[scale=0.3 ,angle=0]{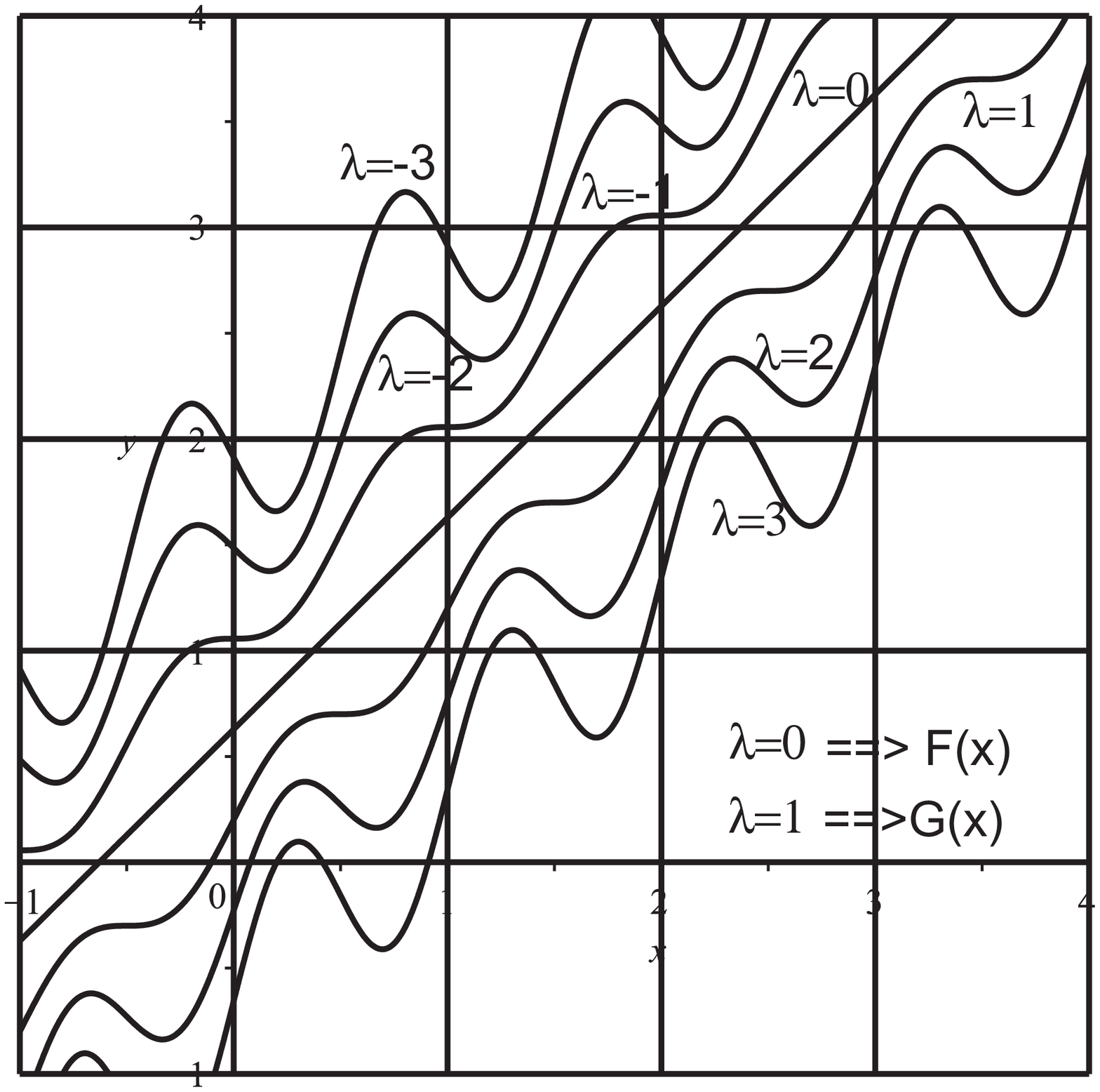}~\quad~\includegraphics[scale=0.3 ,angle=0]{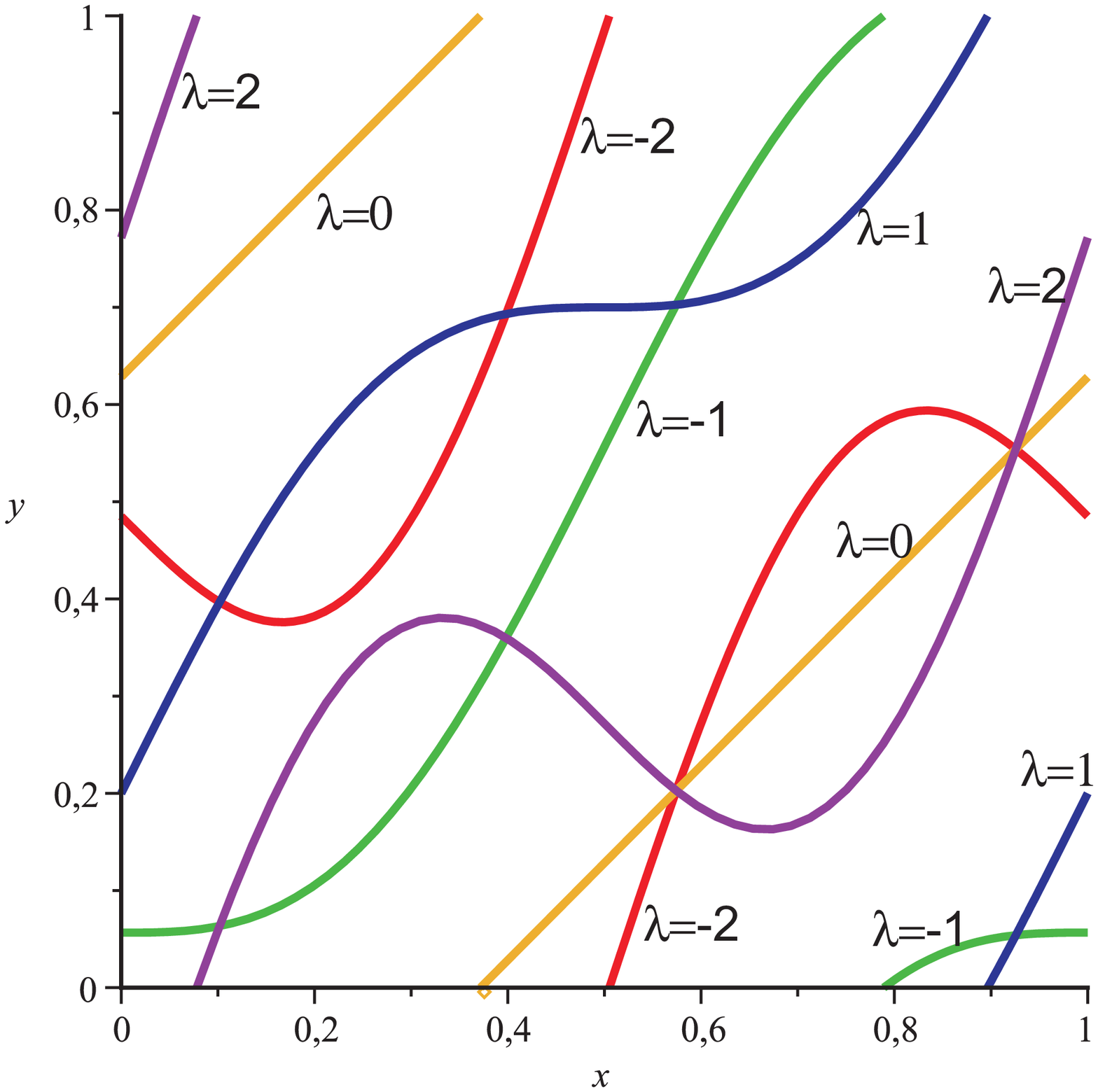}\\
\small{Path of $C^{r}$ maps  $F_{\lambda}$ ($\lambda$-foliation).}
\end{center}

We observe that if $\lambda \in [0,1]$ then all the $F_{\lambda}$  are order preserving diffeomorphisms, but when the parameter $\lambda$ grows the $F_{\lambda}$ are now part of a family of liftings of continuous maps of $S^{1}$. However, there  always exists some $\varepsilon >0$, that depends of $\min\{ \min F', \min G'\}$, such that,  $(1-\lambda) F + \lambda G \in \mathcal{H}_{0}$, for $\lambda \in (-\varepsilon,1+\varepsilon)$.
\end{remark}

\section{A group action on a convex unbounded set} \label{GroupAction}
In this section we will consider a transitive group action on a convex unbounded subset of a general metric linear space $\mathcal{V}$.

Let $(\mathcal{V}, d)$ be a complete metric linear space and $\mathcal{H} \subset \mathcal{V}$ a convex unbounded subset, that is, for any $v,w \in \mathcal{H}$  the line $\{ (1-\lambda) v + \lambda w \, | \, \lambda \in \mathbb{R}  \}$ is contained in $\mathcal{H}$.
Of course, if there exists at least three convex independent vectors $u,v,w \in  \mathcal{H}$, then it contains the 2-dimensional convex subspace $\langle u,v,w \rangle=\{au+bv+cw \, | \, a+b+c=1 \}$, what means that $\mathcal{H}$ admits Lebesgue measures supported in finite $k$-dimensional convex subspaces.

We define a set of automorphisms of $\mathcal{H}$ by:
$$\mathcal{G}=\mathcal{G}_{\mathcal{H}}=\{ \psi_{\lambda, w} : \mathcal{H} \to \mathcal{H} \, | \,  \psi_{\lambda, w}(v)=(1-\lambda) v + \lambda w,  \;  \lambda \in \mathbb{R}-\{1\} \,, w \in \mathcal{H} \}.$$
The homeomorphisms $\psi_{\lambda, w}$ are not actually ``translations" of $\mathcal{H}$. There is no sense to talk about translations in a convex space. On the other hand, the set of convex reflections  $\mathcal{G}$ provides a set of actions that acts transitively in $\mathcal{H}$, and this will be enough to develop the machinery that one needs to built a reasonable prevalence theory. Later we will define a group of transformations $G$ acting on $\mathcal{H}$.

In the future,  the property $\psi_{\lambda, w}(v)=\psi_{1-\lambda, v}(w) \in \mathcal{G}$, if $\lambda \neq 0$, will be useful.

Clearly $(\mathcal{G}, \circ)$ is not a group (because it is not closed under compositions).  For example, if there exists at least three convex independent vectors $u,v,w \in  \mathcal{H}$, then $\psi_{\frac{1}{2}, w}\psi_{-1, u}(v)= v + \frac{1}{2} w - \frac{1}{2} u$, and this cannot be written as convex combination of two vectors in $\mathcal{H}$. In the next proposition we enumerate some of the main properties of this set:
\begin{proposition}
Let $(\mathcal{G}, \circ)$ be the set defined above, then
\begin{itemize}
\item[a)] For any  $\lambda \in \mathbb{R}-\{1\}$ and  $w \in \mathcal{H}$, $\psi_{\lambda, w}$ is a homeomorphism of  $\mathcal{H}$.
\item[b)] $\psi_{0, w}=id$, for any $w \in \mathcal{H}$ (identity property).
\item[c)] For any  $\lambda,\sigma \in \mathbb{R}-\{1\}$ and  $w,u \in \mathcal{H}$, $\psi_{\lambda, w} \circ \psi_{\sigma, u} =\psi_{\sigma, u} \circ \psi_{\lambda, w}$, iff, $\psi_{\sigma, u}=id$, or, $\psi_{\lambda, w}=id$ or $u=w$ (noncommutative property).
\item[d)] For any  $\lambda \in \mathbb{R}-\{1\}$ and  $w \in \mathcal{H}$,
    $\psi_{\lambda, w} \circ \psi_{\sigma, u}  =id $, iff, $u=w$ and $ \frac{1}{\lambda} + \frac{1}{\sigma} =1$. In particular, $\psi_{\lambda, w} ^{-1} = \psi_{ \frac{\lambda}{\lambda - 1} , w}$ (inverse property). In particular also, $\psi_{\lambda, w} \circ \psi_{\sigma, u} \in \mathcal{G} $, iff, $\delta= \lambda + \sigma - \lambda\sigma \neq 0$ or $ \frac{1}{\lambda} + \frac{1}{\sigma} =1$, but with $u=w$.
\item[e)] For any  $v \in \mathcal{H}$, there exists $\lambda \in \mathbb{R}-\{1\}$ and  $\tilde{v}, w \in \mathcal{H}$, such that $\psi_{\lambda, w} \tilde{v} = v$ (transitivity).
\end{itemize}
Thus,  $\mathcal{G}_{\mathcal{H}}$ is a  noncommutative set of homeomorphisms which acts transitively on $\mathcal{H}$.
\end{proposition}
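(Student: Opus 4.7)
The plan is that every item is a short affine computation in $\mathcal{V}$, and the only nontrivial ingredient is the hypothesis that $\mathcal{H}$ is closed under taking the full real line through any pair of its points. I would organize the proof as one short computation per item, reusing results across items.

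For (a), I would note first that $\psi_{\lambda,w}(v) = (1-\lambda)v + \lambda w$ sits on the line through $v$ and $w$, which lies in $\mathcal{H}$ by the hypothesis; continuity in $v$ is immediate from the affine formula, and its inverse is produced once we do (d). Item (b) is a one-line substitution. For (c), I would compute both $\psi_{\lambda,w}\circ\psi_{\sigma,u}(v)$ and $\psi_{\sigma,u}\circ\psi_{\lambda,w}(v)$ explicitly; subtracting and collecting $v$-independent terms reduces the equality to $\lambda\sigma(u-w)=0$, giving the three alternatives.

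For (d), expanding $\psi_{\lambda,w}\circ\psi_{\sigma,u}(v) = (1-\lambda)(1-\sigma)v + (1-\lambda)\sigma u + \lambda w$ and asking that it equal $v$ for every $v$ forces the two conditions $(1-\lambda)(1-\sigma)=1$ (equivalently $\tfrac{1}{\lambda}+\tfrac{1}{\sigma}=1$) and $(1-\lambda)\sigma u + \lambda w = 0$; substituting the first relation into the second reduces it to $\lambda(w-u)=0$, hence $u=w$. Solving $\tfrac{1}{\lambda}+\tfrac{1}{\sigma}=1$ for $\sigma$ yields the explicit inverse $\psi_{\lambda/(\lambda-1),w}$, which retroactively completes the bijectivity claim of (a). To decide when $\psi_{\lambda,w}\circ\psi_{\sigma,u}\in\mathcal{G}$, set $\delta=\lambda+\sigma-\lambda\sigma$; the composition has the form $(1-\delta)v+\delta z$ with $z = \delta^{-1}[(1-\lambda)\sigma u+\lambda w]$ whenever $\delta\neq 0$, and the scalar coefficients of $u,w$ in $z$ sum to $1$, so $z$ lies on the line through $u$ and $w$ and hence in $\mathcal{H}$. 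If $\delta=0$ the composition degenerates to a genuine translation $v\mapsto v+\lambda(w-u)$, which belongs to $\mathcal{G}$ only as the identity, i.e.\ only when $u=w$.

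For (e), given any target $v\in\mathcal{H}$ and any auxiliary $\tilde v\in\mathcal{H}$, the point $w:=2v-\tilde v=\tilde v+2(v-\tilde v)$ lies on the line through $\tilde v$ and $v$ and therefore in $\mathcal{H}$, and then $\psi_{1/2,w}(\tilde v)=v$; the same construction gives the standard transitive-action statement $\psi(v_1)=v_2$. The main obstacle is bookkeeping in (d): one must keep the two regimes $\delta\neq 0$ (genuine element of $\mathcal{G}$) and $\delta=0$ (translation, only admissible when $u=w$) cleanly separated, since it is exactly this dichotomy that explains why $\mathcal{G}$ fails to be closed under composition and motivates introducing the larger group $G$ in the sequel.
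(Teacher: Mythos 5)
Your proposal is correct and follows essentially the same route as the paper: direct affine computations giving $\psi_{\lambda,w}\circ\psi_{\sigma,u}(v)=(1-\delta)v+(\delta-\lambda)u+\lambda w$ with $\delta=\lambda+\sigma-\lambda\sigma$, the reduction of commutativity to $\lambda\sigma(w-u)=0$, the inverse $\psi_{\lambda/(\lambda-1),w}$, membership of the composition in $\mathcal{G}$ when $\delta\neq0$ via the convex-unbounded hypothesis, and transitivity by solving for $w$ on the line through $v$ and $\tilde v$ (your $\lambda=\tfrac12$ choice is just the paper's formula specialized). Your explicit treatment of the degenerate $\delta=0$ translation case and the check that $w\in\mathcal{H}$ in (e) are minor elaborations of details the paper leaves implicit, not a different argument.
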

\begin{proof}$_{ }$

a) Indeed, $(1-\lambda) \tilde{v} + \lambda w = (1-\lambda) v + \lambda w$, implies $\tilde{v}=v$, because $\lambda \in \mathbb{R}-\{1\}$. So, $\psi_{\lambda, w}$ is injective. On the other hand, the equation $\psi_{\lambda, w} v= \tilde{v}$, always have a solution in $\mathcal{H}$:
\begin{align*}
(1-\lambda) v + \lambda w & = \tilde{v}\\
v & = (1- \frac{1}{(1-\lambda)}) w + \frac{1}{(1-\lambda)} \tilde{v}.
\end{align*}

b) It is trivial because  $\psi_{0, w} v = (1-0) v + 0 w = v$;\\

c) For any  $\lambda \in \mathbb{R}-\{1\}$ and  $w \in \mathcal{H}, $
\begin{align*}
\psi_{\lambda, w} \circ \psi_{\sigma, u} v & = (1-\lambda) \psi_{\sigma, u} + \lambda w \\
& = [1- \delta ]v + (\delta - \lambda) u + \lambda w ,
\end{align*}
where $\delta= \lambda + \sigma - \lambda\sigma$.

We observe that $\delta$ is a commutative combination of $\lambda$ and $\sigma$, so
$$\psi_{\sigma, u} \circ  \psi_{\lambda, w} v  = [1- \delta ]v + (\delta - \sigma) w + \sigma u.$$
Finally, $\psi_{\lambda, w} \circ \psi_{\sigma, u} =\psi_{\sigma, u} \circ \psi_{\lambda, w}$, iff, $(\delta - \lambda) u + \lambda w= (\delta - \sigma) w + \sigma u$, or, equivalently $\lambda \sigma (w-u)=0$, what means that $\psi_{\sigma, u}=id$, or $\psi_{\lambda, w}=id$, or $u=w$.\\

d) We already know that $\psi_{\lambda, w} \circ \psi_{\sigma, u} v= [1- \delta ]v + (\delta - \lambda) u + \lambda w$,
where $\delta= \lambda + \sigma - \lambda\sigma$. So, $\psi_{\lambda, w} \circ \psi_{\sigma, u} v=v, \, \forall v$, implies that, $\delta=0$ and $\lambda (w-u)=0$.
On the other hand, $\delta=0$, is equivalent to $ \frac{1}{\lambda} + \frac{1}{\sigma} =1$, and $\lambda (w-u)=0$ is equivalent to $w=u$, or $\lambda=0$, which implies $\psi_{\lambda, w}=id$.

If $\delta \neq 0$, we have, $\psi_{\lambda, w} \circ \psi_{\sigma, u} v= [1- \delta ]v + (\delta - \lambda) u + \lambda w= [1- \delta ]v +\delta [ \frac{(\delta - \lambda)}{\delta} u + \frac{\lambda}{\delta} w] \in \mathcal{G}$, because $\frac{(\delta - \lambda)}{\delta} u + \frac{\lambda}{\delta} w \in \mathcal{H}$.\\

e) Given $v \in \mathcal{H}$ we find $\lambda \in \mathbb{R}-\{1\}$ and  $\tilde{v}, w \in \mathcal{H}$ such that $\psi_{\lambda, w} \tilde{v} = v$. Therefore,
$$\psi_{\lambda, w} \tilde{v}  = v \; \Rightarrow \;
w   = \frac{1}{\lambda}v + (1 - \frac{1}{\lambda}) \tilde{v},
$$
so, for a fixed $\lambda$, we choose $w   = \frac{1}{\lambda}v + (1 - \frac{1}{\lambda}) \tilde{v}$, and, then $\psi_{\lambda, w} \tilde{v} = v$.
\end{proof}

We define now the group $(G, \circ)$ of the reflections of $\mathcal{H}$:
$$G=\{\psi_{\lambda_{1}, w_{1}} \circ \psi_{\lambda_{2}, w_{2}} \circ \dots \circ \psi_{\lambda_{n}, w_{n}} \; | \psi_{\lambda_{i}, w_{i}} \in \mathcal{G} \}.$$
Some easy computations shows that $G$ can be written as
$$G=\{\psi(v)=(1-\delta) v + \sum_{i=1}^{n}a_{i}w_{i} \; |  \delta \in \mathbb{R} -\{1\}, \; \sum_{i=1}^{n}a_{i} =\delta, \; w_{i} \in \mathcal{H}\}.$$

This group acts transitively in $\mathcal{H}$ and is topologically free:\\
\emph{Any $\psi \in G - \{Id\}$  has an unique fixed point given by the center of mass  of the vectors $w_i$'s that defines itself.}

Indeed, $\psi(v)=v$ implies that $(1-\delta) v + \sum_{i=1}^{n}a_{i}w_{i}=v$. As $\psi \neq Id$ we get $\delta \neq 0$, so $v=\frac{1}{\delta}\sum_{i=1}^{n}a_{i}w_{i}=\frac{1}{\sum_{i=1}^{n}a_{i}}\sum_{i=1}^{n}a_{i}w_{i}$. In particular, the only fixed point of $\psi_{\lambda, w}$ is $v=w$.

\section{Prevalence on $\mathcal{H}$}

The goal of this section is to define and to show the consistence of a theory which consider \emph{reflection invariant} almost every properties.

Initially, we must to define the notion of $G$- \emph{\textbf{shy}} sets in $\mathcal{H}$, as been Haar (for the group of reflections) null sets for some measure in the Borel sets  $\mathcal{H}$.

\begin{definition} \label{transverse}
Given a Borel subset $S$ we will say that a measure $\mu$ is $G$-transverse to $S$, and denote \textbf{$\mu \pitchfork S$}, if:\\
a) There exists a compact $U$, such that, $0< \mu(U) < \infty$,  ($\mu$ can be taken a compact supported measure);\\
b) $\mu( \psi S)=0$, for any $\psi \in G$.
\end{definition}

In this case we say that $S$ is $G$-shy Borel subset. The hypothesis (a) is natural from \cite{OY}, and the second one ensures that the property is  a  \emph{reflection invariant}. Analogously, a subset $S$ in $\mathcal{H}$ is said to be $G$-\textbf{shy}, if $S$ is contained in $S'$, a $G$-\textbf{shy} Borel subset  $\mathcal{H}$.

Finally, we call $G$-\textbf{prevalent} a  subset $P$ of $\mathcal{H}$, such that $P^{c}$ is $G$-\textbf{shy}.

In order to simplify the notation, from now on we will drop the letter $G$. We just point out that the concept described here is a little bit different from the usual ones.

\begin{definition}\label{GenericPrevalence}
A set $P \subset \mathcal{H}$ it said to be \textbf{generic} (in the prevalent sense) if $P$ contains some prevalent set.
\end{definition}

According to \cite{OY} the consistence of the category of the generic sets needs to satisfy the following set of axioms:\\
\textbf{Axiom 1-} A generic subset of $\mathcal{H}$ is dense;\\
\textbf{Axiom 2-} If $P \subset Q$ and $P$ is generic then $Q$ is generic;\\
\textbf{Axiom 3-} A countable intersection of generic subsets of $\mathcal{H}$ is generic;\\
\textbf{Axiom 4-} Every reflection of a generic subset is generic.\\

In order to prove Axiom 1 we need to consider the following lemma.
\begin{lemma}
Given a Borel subset $S$ and a measure $\mu$ transverse to $S$, there exists another compact supported measure $\nu$ such that:\\
a) $\nu \pitchfork S$;\\
b) The support of $\nu$ is contained in a ball of arbitrarily small radius.
\end{lemma}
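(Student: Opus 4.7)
The plan is to push $\mu$ forward by a carefully chosen reflection $\psi_{\lambda,w}\in\mathcal{G}$ that contracts $\mathcal{H}$ toward a fixed point $w\in\mathcal{H}$. For $\lambda\in(0,1)$ close to $1$, the map $\psi_{\lambda,w}(v)=(1-\lambda)v+\lambda w$ squeezes every compact subset of $\mathcal{H}$ into an arbitrarily small neighborhood of $w$: continuity of scalar multiplication in the metric linear space $(\mathcal{V},d)$ gives $\psi_{\lambda,w}(u)\to w$ as $\lambda\to 1^{-}$ for each $u$, and by compactness of $\supp(\mu)$ this convergence is uniform. Hence, given a prescribed radius $\varepsilon>0$, one can choose $\lambda$ so close to $1$ that $\psi_{\lambda,w}(\supp(\mu))\subset B(w,\varepsilon)$; the push-forward $\nu := (\psi_{\lambda,w})_{*}\mu$ is then supported in $B(w,\varepsilon)$, settling (b).

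For (a), first note that $\psi_{\lambda,w}$ is a homeomorphism by part (a) of the previous proposition, so $U':=\psi_{\lambda,w}(\supp(\mu))$ is compact and $\nu(U')=\mu(\supp(\mu))\in(0,\infty)$. To verify that $\nu(\psi S)=0$ for every $\psi\in G$, a direct change of variables yields
\[
\nu(\psi S)=\mu\bigl(\psi_{\lambda,w}^{-1}(\psi S)\bigr)=\mu\bigl((\psi_{\lambda,w}^{-1}\circ\psi)(S)\bigr).
\]
By part (d) of the proposition, $\psi_{\lambda,w}^{-1}=\psi_{\lambda/(\lambda-1),w}\in\mathcal{G}\subset G$, and since $G$ is a group under composition, $\psi_{\lambda,w}^{-1}\circ\psi\in G$. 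The transversality hypothesis $\mu\pitchfork S$ then forces the right-hand side to vanish, giving $\nu\pitchfork S$.

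The main subtlety is the uniform contraction step in the first paragraph, where one uses that $\mathcal{V}$ is a \emph{metric linear space}, i.e.\ that scalar multiplication is jointly continuous with the metric; this is essentially the only place the ambient linear structure enters the argument. Everything else is a short group-theoretic manipulation of the reflections furnished by the preceding proposition, and once the contraction is established the transversality of $\nu$ reduces to a one-line change of variables.
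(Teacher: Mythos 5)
Your argument is correct in substance but follows a genuinely different route from the paper. The paper's proof never moves the measure: it covers the compact set $U$ (with $0<\mu(U)<\infty$) by $\varepsilon$-balls, uses compactness to find a ball $B(v_{0},\varepsilon)$ meeting $U$ in a set of positive $\mu$-measure, and takes $\nu(A)=\mu(\overline{B(v_{0},\varepsilon)\cap U}\cap A)$; transversality of $\nu$ is then immediate from $\nu\le\mu$, and neither the linear structure nor the group structure is used. You instead push $\mu$ forward by a reflection $\psi_{\lambda,w}$ with $\lambda\to 1^{-}$, contracting a compact set into a prescribed small ball around a point $w$ of your choosing; here transversality of $\nu$ is not a monotonicity statement but rests on $G$ being a group, so that $\psi_{\lambda,w}^{-1}\circ\psi\in G$ --- a step you justify correctly via part (d) of the proposition, and the uniform contraction on compacts does follow from joint continuity of $(\lambda,u)\mapsto(1-\lambda)u+\lambda w$. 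What your approach buys is the freedom to center the small support at any chosen $w\in\mathcal{H}$ and to keep the full mass $\mu(U)$, at the cost of invoking the metric linear structure that the paper's restriction argument avoids. One adjustment is needed: the definition of $G$-transversality (Definition \ref{transverse}) only provides a compact $U$ with $0<\mu(U)<\infty$; it does not guarantee that $\supp(\mu)$ is compact, nor that $\mu(\supp(\mu))<\infty$, so your statements ``by compactness of $\supp(\mu)$'' and $\nu(U')=\mu(\supp(\mu))\in(0,\infty)$ are not justified as written. Either first replace $\mu$ by its restriction to $U$ (which is still transverse and now compactly supported with finite mass), or run the contraction and the mass estimate on $U$ itself, taking $U'=\psi_{\lambda,w}(U)$ so that $\nu(U')=\mu(U)\in(0,\infty)$; with that change your proof goes through.
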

\begin{proof}
Given, $\varepsilon>0$, we can take a cover of $U$, given by $\cup_{v \in U} B(v, \varepsilon)$. As  $U$ is compact, and $0< \mu(U) < \infty$, we can choose $U'=\overline{B(v_{0},\varepsilon)\cap U}$ with positive measure, and define $\nu(A)= \mu(U' \cap A)$. It is easy to see that $\nu$ satisfies the claim of the lemma.
\end{proof}

Now we are able to check Axiom 1.

\begin{theorem}
Every generic subset is dense.
\end{theorem}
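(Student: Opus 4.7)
The plan is to reduce, via Axiom 2, to showing that every prevalent subset of $\mathcal{H}$ is itself dense. So suppose $P$ is prevalent: there is a Borel set $S \supseteq P^c$ and a compactly supported measure $\mu$ with $\mu \pitchfork S$. Fix $v_0 \in \mathcal{H}$ and $\varepsilon > 0$; the goal is to produce a point of $P$ in the ball $B(v_0, \varepsilon)$.

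The key move is to transport the transverse measure so that its support sits inside $B(v_0, \varepsilon)$, and then read off transversality at $\phi = \mathrm{id}$. First, use the preceding lemma to obtain a $G$-transverse measure $\nu$ supported in some ball $B(w_0, \delta)$, where $\delta > 0$ is chosen small enough (depending only on $\varepsilon$) so that halving any vector of $d$-norm less than $\delta$ gives a vector of $d$-norm less than $\varepsilon$; this is possible by continuity of scalar multiplication at the origin in the metric linear space $\mathcal{V}$. Then set $w := 2v_0 - w_0$ and $\psi := \psi_{1/2, w} \in \mathcal{G} \subseteq G$. Since $w$ lies on the line through $v_0$ and $w_0$ (namely at $\lambda = -1$ in $(1-\lambda)v_0 + \lambda w_0$), the paper's convexity hypothesis that $\mathcal{H}$ contains whole lines through any two of its points forces $w \in \mathcal{H}$, so $\psi$ is an admissible element of $\mathcal{G}$. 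A direct computation gives $\psi(v) - v_0 = \tfrac{1}{2}(v - w_0)$, and by the choice of $\delta$ together with translation invariance of $d$ this yields $\psi(B(w_0, \delta)) \subseteq B(v_0, \varepsilon)$.

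To finish, push $\nu$ forward by $\psi$. For every $\phi \in G$, since $G$ is a group $\psi^{-1} \circ \phi \in G$, and therefore
\[
\psi_* \nu (\phi S) \;=\; \nu(\psi^{-1} \phi \, S) \;=\; 0,
\]
i.e., $\psi_* \nu \pitchfork S$. Taking $\phi = \mathrm{id}$ yields $\psi_* \nu(S) = 0$, while $\psi_*\nu(B(v_0, \varepsilon)) \geq \nu(\supp \nu) > 0$ by construction. Hence $\psi_* \nu(B(v_0, \varepsilon) \setminus S) > 0$, and since $S^c \subseteq P$ the set $B(v_0, \varepsilon) \cap P$ is nonempty, proving density.

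The main obstacle is purely one of admissibility: the point $w = 2v_0 - w_0$ involves a negative coefficient on $w_0$, so ordinary convexity of $\mathcal{H}$ would not place $w$ in $\mathcal{H}$. The argument genuinely needs the paper's stronger ``convex unbounded'' hypothesis (closure under whole lines, not merely segments), which is exactly what allows the reflection group $G$ to act transitively in the first place; the remaining measure-theoretic manipulations are routine pushforward computations.
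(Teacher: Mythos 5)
Your proposal is correct and is essentially the paper's argument in contrapositive form: the paper shows the shy complement $S$ has empty interior by reflecting a hypothetical ball $B(w_{0},\delta)\subset S$ onto a ball around a point of positive $\mu$-measure, whereas you reflect the small support of the transverse measure (obtained from the same preliminary lemma) into the target ball $B(v_{0},\varepsilon)$; in both cases the key step is a single reflection $\psi_{\lambda,w}\in\mathcal{G}$ whose admissibility rests on $\mathcal{H}$ containing whole lines, together with $\mu(\psi S)=0$. Your treatment is in fact slightly more careful than the paper's on two minor points (using continuity of scalar multiplication rather than the paper's implicit homogeneity of $d$, and checking $w=2v_{0}-w_{0}\in\mathcal{H}$ explicitly), but the route is the same.
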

\begin{proof}
If $P$ is a generic set then its complement $S$ must to be shy. We claim that  $S$ has no interior points. Indeed, if $w_{0} \in \text{int} S$, then there exists $\delta >0$  such that $B(w_{0},\delta) \subset S$. From transitivity of $G$ we can find $w   = \frac{1}{\lambda}v_{0} + (1 - \frac{1}{\lambda}) w_{0}$, then $\psi_{\lambda, w} w_{0} = v_{0}$. Using that relation $\psi_{\lambda, w}B(w_{0},\delta)=B(\psi_{\lambda, w} w_{0}, (1-\lambda )\delta)=B(v_{0}, (1-\lambda )\delta)$, we choose $\lambda \in (0,1)$, then $B(v_{0},\varepsilon) \subset B(v_{0}, (1-\lambda )\delta)$ for  $\varepsilon < (1-\lambda )\delta$. Thus, $0=\mu(\psi_{\lambda, w} S) \geq \mu(\psi_{\lambda, w} B(w_{0},\delta))= \mu(B(v_{0}, (1-\lambda )\delta)) \geq \mu(B(v_{0},\varepsilon)) >0$, and we get a contradiction.
\begin{center}
\includegraphics[scale=0.24 ,angle=0]{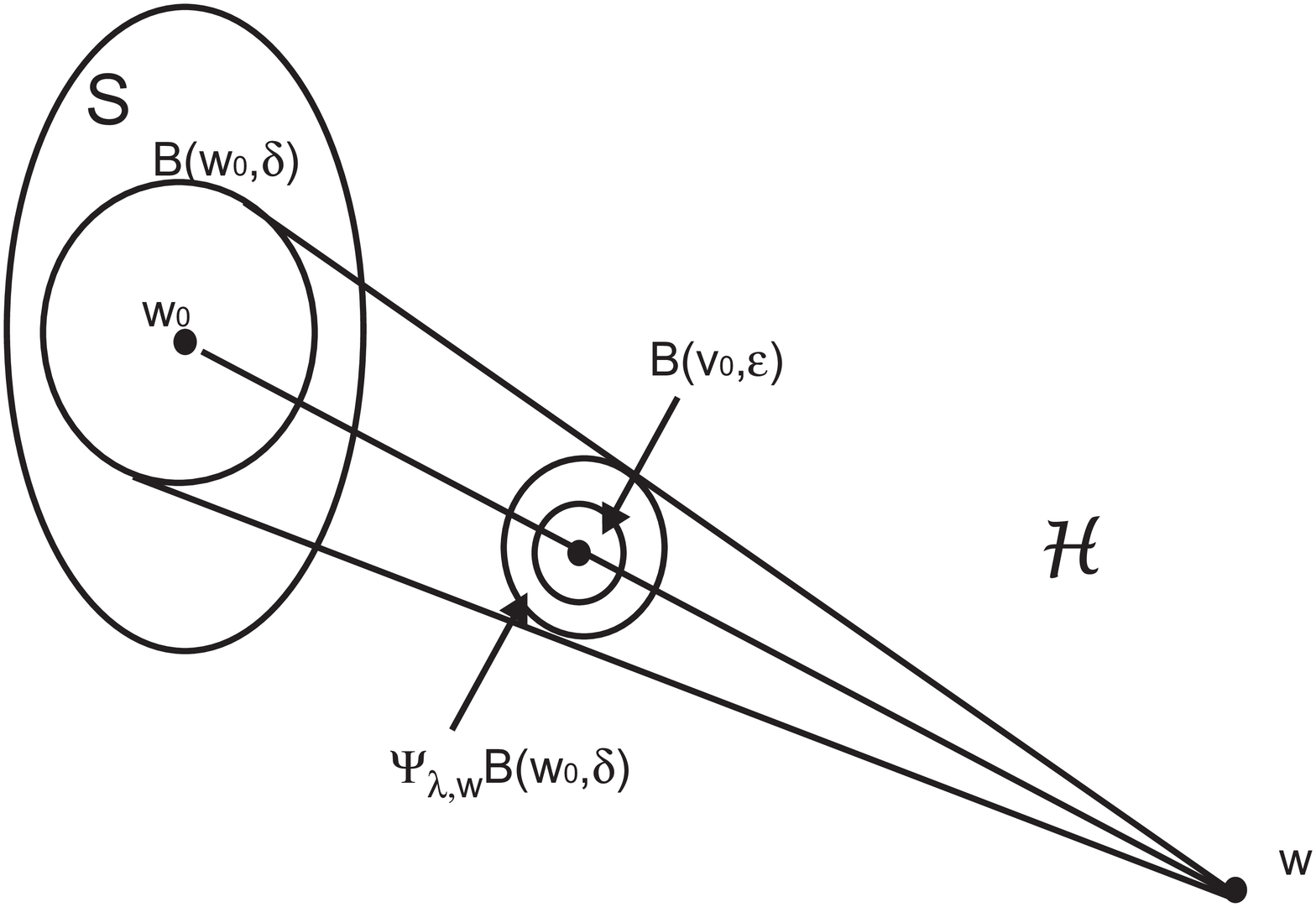}\\
\small{Reflection of $B(w_{0},\delta)$}
\end{center}

\end{proof}

The Axioms 2 and 4 are trivially obtained from the definition. Then,  we start now the proof of Axiom 3 by proving first the simplest  case:

\begin{lemma}
Let $S_{1}, S_{2}$ be shy Borel subsets and $\mu \pitchfork S_{1}$, $\nu \pitchfork S_{2}$, then there exists another compact supported measure $\mu * \nu$ such that:\\
$(\mu * \nu) \pitchfork S_{1}$, and $(\mu * \nu) \pitchfork S_{2}$.
\end{lemma}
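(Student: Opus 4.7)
The plan is to imitate the classical convolution construction, replacing addition (unavailable in $\mathcal{H}$) by a convex combination. Concretely, I fix any $\lambda \in (0,1)$ and define $\mu * \nu$ to be the push-forward of the product measure $\mu \times \nu$ under the continuous map
$$\Phi : \mathcal{H} \times \mathcal{H} \to \mathcal{H}, \qquad \Phi(x,y) = (1-\lambda) x + \lambda y.$$
Convexity of $\mathcal{H}$ together with $\lambda \in (0,1)$ guarantees that $\Phi$ maps into $\mathcal{H}$, and since $\mu, \nu$ are compactly supported with finite positive mass, so is $\mu * \nu$, with total mass equal to the product of the total masses of $\mu$ and $\nu$; this yields condition (a) of Definition \ref{transverse}. (Should one later want a small support for $\mu * \nu$, the previous lemma can be applied to it.)

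For condition (b) applied to $S_1$, I pick an arbitrary $\psi \in G$ and compute via Fubini
$$(\mu * \nu)(\psi S_1) = \int_{\mathcal{H}} \mu\bigl(\{\, x \in \mathcal{H} : (1-\lambda) x + \lambda y \in \psi S_1 \,\}\bigr)\, d\nu(y).$$
The crucial observation is that $x \mapsto (1-\lambda) x + \lambda y$ is exactly the reflection $\psi_{\lambda, y} \in \mathcal{G}$, so the inner set is $\psi_{\lambda, y}^{-1}(\psi S_1)$. By item (d) of the preceding proposition, $\psi_{\lambda, y}^{-1} = \psi_{\lambda/(\lambda - 1), y}$, and the parameter $\lambda/(\lambda - 1)$ is automatically different from $1$, so this inverse lies in $\mathcal{G} \subset G$. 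Since $G$ is closed under composition, the inner set equals $(\psi_{\lambda/(\lambda - 1), y} \circ \psi)(S_1) = \phi_y S_1$ for some $\phi_y \in G$, and transversality $\mu \pitchfork S_1$ forces $\mu(\phi_y S_1) = 0$. The integral vanishes.

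A symmetric argument handles $S_2$: after switching the order of integration, the map $y \mapsto (1-\lambda) x + \lambda y$ is the reflection $\psi_{1-\lambda, x} \in \mathcal{G}$, whose inverse again lies in $\mathcal{G}$ (since $1-\lambda \neq 1$ and the corresponding inverse parameter stays different from $1$), so the inner set becomes $\phi'_x S_2$ for some $\phi'_x \in G$ and has $\nu$-measure zero by $\nu \pitchfork S_2$. The only delicate point is the algebraic bookkeeping showing that the inverses $\psi_{\lambda, y}^{-1}$ and $\psi_{1-\lambda, x}^{-1}$ remain in $\mathcal{G}$; this is automatic for $\lambda \in (0,1)$. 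Beyond that, the proof reduces to Fubini together with the group structure of $G$.
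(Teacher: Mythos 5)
Your construction is exactly the paper's convex convolution: the push-forward of $\mu\times\nu$ under $(x,y)\mapsto(1-\lambda)x+\lambda y$, with Fubini plus the identity $\psi_{\lambda,y}^{-1}=\psi_{\lambda/(\lambda-1),y}\in\mathcal{G}$ (and its symmetric counterpart $\psi_{1-\lambda,x}$) reducing each inner set to a $G$-reflection of $S_1$ or $S_2$, and positivity on the compact image of the product of supports. The argument is correct and follows essentially the same route as the paper, with only cosmetic differences (fixing $\lambda\in(0,1)$ and checking invariance against all $\psi\in G$ rather than just $\mathcal{G}$).
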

\begin{proof}
In our proof  we adapt, in part, the idea presented in \cite{HSY}, \cite{OY}, (see \cite{HH} for convolutions on topological groups) where it is introduced the concept of a \textbf{convex convolution} of transversal measures in $\mathcal{H}$. However, in our setting  we have to face  several technical difficulties. Given $\lambda \in \mathbb{R}$, and a Borel subset $A \subseteq \mathcal{H}$, we define
$$(\mu * \nu)_{\lambda} (A)= \int_{\mathcal{H}}\int_{\mathcal{H}}\chi_{\{\psi_{\lambda,w}v \in A\}} (v,w) d\mu(v) d\nu(w).$$

In other words, for each Borel subset $A \subset \mathcal{H}$, we consider $A^{\lambda}=\{(v,w) \in \mathcal{H}^{2} \, | \, \psi_{\lambda,w}v \in A \}$, and define
$$(\mu * \nu)_{\lambda} (A)=(\mu \times \nu)(A^{\lambda}).$$

As $\psi_{\lambda, w}(v)=\psi_{1-\lambda, v}(w) \in \mathcal{G}$, if $\lambda \neq 0$ then we can rewrite the convolution as,
$$(\mu * \nu)_{\lambda} (A)= \int_{\mathcal{H}} \mu(\psi_{\frac{\lambda}{\lambda -1},w}(A)) d\nu(w),$$
or
$$(\mu * \nu)_{\lambda} (A)= \int_{\mathcal{H}} \nu(\psi_{1- \frac{1}{\lambda },v}(A)) d\mu(v).$$

The equations above ensure the second part of the definition of shyness, because:
$$(\mu * \nu)_{\lambda} \psi_{\sigma,u}(S_{1})= \int_{\mathcal{H}} \mu(\psi_{\frac{\lambda}{\lambda -1},w}\psi_{\sigma,u}(S_{1})) d\nu(w)=0,$$
and
$$(\mu * \nu)_{\lambda} \psi_{\sigma,u}(S_{2})= \int_{\mathcal{H}} \nu(\psi_{1 -\frac{1}{\lambda },v}\psi_{\sigma,u}(S_{2})) d\mu(v)=0,$$
for any $\psi_{\sigma,u} \in \mathcal{G}$.

As $\mu \pitchfork S_1$ and $\nu \pitchfork S_2$, there exists compacts sets $K_1, K_2$, such that, $0< \mu(K_{1}) < \infty$, and $0< \nu(K_{2}) < \infty$. Let $K=(1-\lambda) K_1 + \lambda K_2$. We observe that the function $\Phi: \mathcal{H}^2 \to \mathcal{H}$, given by $\Phi(v,w)=\psi_{\lambda,w}v$, is continuous, so $K=\Phi(K_1 \times K_2)$ is compact because it's the image of the compact set $K_1 \times K_2$. Now we compute $(\mu * \nu)_{\lambda}(K)$:
$$(\mu * \nu)_{\lambda}(K)=(\mu \times \nu)(K^{\lambda})=(\mu \times \nu)(K_1 \times K_2)= \mu(K_{1}) \nu(K_{2}).$$
\end{proof}

The extension of this result for the case of a general finite union of shy sets is not a direct consequence of the previous one because $G$ is noncommutative. We have to proceed in a combinatorial way:\\

First, we observe that $\psi_{\lambda, w}(v)=\psi_{1-\lambda, v}(w) \in \mathcal{G}$ implies the remarkable property:
$$\{ \psi_{\lambda_{n-1}, w_{n}} \psi_{\lambda_{n-2}, w_{n-1}} ... \psi_{\lambda_{k-1}, w_{k}} ... \psi_{\lambda_{1}, w_{2}}(w_{1}) \in A \}= \{ w_{k} \in \psi A,\; \psi \in G \}.$$

In order to see this first we take, $$\varphi=\psi_{\lambda_{n-1}, w_{n}}... \psi_{\lambda_{k-2}, w_{k-1}}, \, \zeta =\psi_{\lambda_{k }, w_{k+1}} ... \psi_{\lambda_{1}, w_{2}} \in G,$$
\begin{align*}
\psi_{\lambda_{n-1}, w_{n}} \psi_{\lambda_{n-2}, w_{n-1}} ... \psi_{\lambda_{k-1}, w_{k}} ... \psi_{\lambda_{1}, w_{2}}(w_{1}) & = \varphi \psi_{\lambda_{k-1}, w_{k}} \zeta (w_{1})= z  \\
\psi_{\lambda_{k-1}, w_{k}} \zeta (w_{1}) & = \varphi^{-1} z  \\
\psi_{1-\lambda_{k-1}, \zeta (w_{1}) } (w_{k}) & = \varphi^{-1} z\\
w_{k} & =\psi_{1-\lambda_{k-1}, \zeta (w_{1}) }^{-1}\psi_{\varphi}^{-1} z,
\end{align*}
then we take $\psi=\psi_{1-\lambda_{k-1}, \zeta }^{-1}\psi_{\varphi}^{-1} \in G $.\\

This argument ensures that, if $\mu_{i} \pitchfork S_i$, $i=1,2, ...,n$, then $$(\mu_{n} * \mu_{n-1} * ...\mu_{1})(\psi S_i)=0, \, \forall \psi \in G.$$

To prove that $(\mu_{n} * \mu_{n-1} * ...\mu_{1})_{\lambda} (K)>0$, for some compact $K$, the same argument will work, we just point out that the function $\Phi: \mathcal{H}^2 \to \mathcal{H}$ given by $\Phi(w_1,w_2, ...,w_n)=\psi_{\lambda_{n-1},w_{n}} \circ ..\circ \psi_{\lambda_1,w_2}w_1$ is continuous, so $K=\Phi(K_1 \times ... \times K_n)$ is compact as the image of the compact $K_1 \times ... \times  K_n$, and if we compute $(\mu_{n} * \mu_{n-1} * ...\mu_{1})_{\lambda}(K)$ we get
$$(\mu_{n} * \mu_{n-1} * ...\mu_{1})_{\lambda}(K)= \mu_{1}(K_{1}) ...\mu_{n}(K_{n}).$$ In order to prove that Axiom 3 is true, we need to understand how to control the process of countable convex convolution, and this will be done next.

\begin{theorem}\label{countableSHY}
If $\mu_{i} \pitchfork S_{i}, \; i=\{1,2,...\}$, then there exists a measure $\mu$ such that $\mu \pitchfork S_{i}, \; i=\{1,2,...\}$.
\end{theorem}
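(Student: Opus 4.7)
The plan is to construct $\mu$ as an infinite iterated convex convolution of the $\mu_i$, generalizing the finite-case argument with rapidly decaying reflection parameters. First, by the shrinking-support lemma, I may assume each $\mu_i$ is supported in a compact set $K_i\subset\mathcal{H}$ of diameter at most $\varepsilon_i$, and after a harmless rescaling arrange $\mu_i(K_i)\in (1/2,1)$ with $\prod_i \mu_i(K_i)>0$. Fix parameters $\lambda_i\in (0,1)$ with $\sum_i \lambda_i<\infty$ (say $\lambda_i=2^{-i}$) and take the radii $\varepsilon_i$ so small that $\sum_i \lambda_{i-1}\varepsilon_i$ converges in the metric of $\mathcal{V}$.

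Second, consider the partial compositions
$$\Phi_n(w_1,\dots,w_{n+1})=\psi_{\lambda_n,w_{n+1}}\circ\cdots\circ\psi_{\lambda_1,w_2}(w_1)=\Bigl(\prod_{j=1}^{n}(1-\lambda_j)\Bigr)w_1+\sum_{k=2}^{n+1}\lambda_{k-1}\Bigl(\prod_{j=k}^{n}(1-\lambda_j)\Bigr)w_k,$$
a genuine convex combination (its coefficients telescope to one). The decay choices give uniform convergence of $\Phi_n$ on $\prod_i K_i$ (product topology) to a continuous map $\Phi\colon \prod_i K_i\to\mathcal{H}$, using closedness of $\mathcal{H}$ in $\mathcal{V}$. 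By Tychonoff, $K:=\Phi(\prod_i K_i)$ is compact, and
$$\mu:=\Phi_{\ast}\Bigl(\bigotimes_{i\ge 1}\mu_i\Bigr)$$
is a compactly supported Borel measure on $\mathcal{H}$ with $0<\mu(K)=\prod_i \mu_i(K_i)<\infty$.

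Third, to verify $\mu\pitchfork S_k$ fix $\psi\in G$ and freeze $w_j$ for $j\neq k$. The map $w_k\mapsto\Phi(\dots,w_k,\dots)$ is affine with coefficient $\alpha_k\in(0,1)$ on $w_k$ (equal to $\lambda_{k-1}\prod_{j\ge k}(1-\lambda_j)$ when $k\ge 2$, or $\prod_{j\ge 1}(1-\lambda_j)$ when $k=1$) and intercept a sub-convex combination of the frozen $w_j$'s whose weights sum to $1-\alpha_k$. Thus this map coincides with $\psi_{1-\alpha_k,c_k}\in\mathcal{G}$, with $c_k\in\mathcal{H}$ by convexity, so
$$\{\,w_k:\Phi(\dots,w_k,\dots)\in\psi S_k\,\}=\tilde\psi\,S_k,\qquad \tilde\psi:=\psi_{1-\alpha_k,c_k}^{-1}\circ\psi\in G.$$
Fubini then yields
$$\mu(\psi S_k)=\int \mu_k(\tilde\psi\,S_k)\,d\Bigl(\bigotimes_{j\neq k}\mu_j\Bigr)=0$$
since $\mu_k\pitchfork S_k$.

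The main obstacle is the simultaneous calibration of the $\lambda_i$: they must decay fast enough ($\sum\lambda_i<\infty$) that $\Phi$ is well-defined, continuous and has compact image inside $\mathcal{H}$, yet each $\lambda_{k-1}$ must remain strictly positive so that the slope $\alpha_k$ is nonzero and the isolation-of-$w_k$ step produces an honest element $\tilde\psi\in G$. This delicate noncommutative balance replaces the translation invariance that handles the analogous step automatically in the classical additive prevalence setting of \cite{HSY}.
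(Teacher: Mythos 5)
Your construction is essentially the paper's own: the countable convex convolution is realized as the pushforward of $\mu_1\times\mu_2\times\cdots$ under the limit $\Phi$ of the iterated reflections $\psi_{\lambda_n,w_{n+1}}\circ\cdots\circ\psi_{\lambda_1,w_2}(w_1)$, compactness and positivity of the image measure come from Tychonoff together with continuity of $\Phi$, and transversality to each $S_k$ is obtained by isolating the $k$-th coordinate into a single reflection $\psi_{1-\alpha_k,c_k}$ and applying Fubini with $\mu_k\pitchfork S_k$ --- this is exactly the paper's ``remarkable property'' step, and your rendition of it is in fact cleaner. The only substantive difference is bookkeeping: the paper fixes $\lambda_n=\frac{1}{\pi^2 n^2}$ and evaluates $\prod_n(1-\lambda_n)=\sin(1)$ via Euler's product, whereas you take $\lambda_i=2^{-i}$ and shrink supports; any summable sequence with $\prod_i(1-\lambda_i)>0$ serves equally well. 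One caveat: uniform convergence of $\Phi_n$ on $\prod_i K_i$ does not follow from making the diameters $\varepsilon_i$ small, because the increment $\Phi_{n+1}-\Phi_n=\lambda_{n+1}\,(w_{n+2}-\Phi_n)$ is governed by how far $K_{n+2}$ sits from the earlier supports, not by its diameter; the correct calibration is to choose the $\lambda_i$ adaptively, small relative to the size and location of $K_{i+1}$ (e.g.\ so that $\sum_i\lambda_i\,\sup_{w\in K_{i+1}}d(w,w_0)<\infty$), after which everything else you wrote goes through verbatim. This is a minor repair, and the paper is no more careful on this point: it fixes the $\lambda_n$ independently of the $K_n$ and simply asserts convergence and continuity of $\Phi$.
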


\begin{proof} In order to prove this, we introduce the \textbf{Countable Convex Convolution (CCC)} of $\{ \mu_n\}$ (see \cite{HH} for tight sequences and infinite convolutions):\\

Consider $\mathcal{H}^{\pi}=\mathcal{H} \times \mathcal{H} \times \mathcal{H} \times ....$, and let $K=K_{1} \times K_{2} \times ...$, where $K_{n}$ is the support of each $\mu_n$, by Tychonov theorem we know that $K$ is compact; additionally, we suppose that each $\mu_n$ is a measure of probability. We choose a sequence of $\psi_{\lambda_{n}, w} \in \mathcal{G}$ and define, for each measurable $A \subset \mathcal{H}$ the set:
$$A^{\lambda}=\{ (w_1, w_2,...)\in \mathcal{H}^{\pi} \, | \, ... \circ\psi_{\lambda_{n-1}, w_n} \circ ... \circ\psi_{\lambda_{1}, w_2}(w_1) \in A\},$$
thus, the CCC probability of $A$ is
$$(... * \mu_{n} * ... * \mu_{1})_{\lambda} (A)=(\mu_{1} \times \mu_{2} \times ...)(A^{\lambda}).$$

On the other hand,  $A^{\lambda}=\Phi^{-1}(A)$, where $\Phi:\mathcal{H}^{\pi} \to  \mathcal{H}$ is given by:
$$\Phi(w_1, w_2,...)= ... \circ\psi_{\lambda_{n-1}, w_n} \circ ... \circ\psi_{\lambda_{1}, w_2}(w_1).$$

Now, we must show that $\Phi$ is well defined and continuous. Consider $\lambda_{n}=\frac{1}{\pi^2 n^2}, \, n \in \mathbb{N},$
then
\begin{align*}
\Phi(w_1, w_2,...)&=... \circ\psi_{\lambda_{n-1}, w_n} \circ ... \circ\psi_{\lambda_{1}, w_2}(w_1)\\
&= \lim_{n \to \infty} p_{n-1} w_{1} + p_{n-1}\frac{\lambda_{1}}{p_{1}} w_{2}+ p_{n-1}\frac{\lambda_{2}}{p_{2}} w_{3}+ ... + p_{n-1}\frac{\lambda_{-1}}{p_{n-1}} w_{n}\\
&=\lim_{n \to \infty} p_{n-1} \left[ w_{1} +  \sum_{i=1}^{n-1} \frac{\lambda_{i}}{p_{i}} w_{i+1}\right],\\
\end{align*}
where $p_{n}=\prod_{i=1}^{n} (1-\lambda_{i})$. From Euler's theorem for infinite products, we know that
$$\sin(z)=z \prod_{n=1}^{\infty} (1-\frac{z^2}{\pi^2 n^2}), \forall z \in \mathbb{C},$$
and, taking $z=1$, we get $\lim_{n \to \infty} p_{n-1}=\sin(1)$, what means
$$\Phi(w_1, w_2,...)= \sin(1) w_1 + \frac{\sin(1)}{\pi^2 (1-\frac{1}{\pi^2})} w_2 + \frac{\sin(1)}{4 \pi^2 (1-\frac{1}{\pi^2})(1-\frac{1}{4 \pi^2})} w_3 + ...$$
 $\Phi$ is, obviously, a continuous map in the product topology of $\mathcal{H}^{\pi}$.

Finally, we observe that $$\Phi(K) = \sin(1) K_1 + \frac{\sin(1)}{\pi^2 (1-\frac{1}{\pi^2})} K_2 + \frac{\sin(1)}{4 \pi^2 (1-\frac{1}{\pi^2})(1-\frac{1}{4 \pi^2})} K_3 + ...$$ is a compact set in $\mathcal{H}^{\pi}$,  and
$(... * \mu_{n} * ... * \mu_{1})_{\lambda} (K)=\mu_{1}(K_1)\mu_{2}(K_2) ... = 1$. It's also easy, to see that for a fixed $i$,  $(\mu_{n} * \mu_{n-1} * ...\mu_{1})(\psi S_i)=0, \, \forall \psi \in G, \forall n\geq i$, so $(... * \mu_{n} * ... * \mu_{1})_{\lambda}(\psi S_i)=0, \, \forall \psi \in G.$ Thus $(... * \mu_{n} * ... * \mu_{1})_{\lambda} \pitchfork S_i$, $i=\{1,2, ...\}$.\\

\end{proof}

\vskip 2mm

\section{Conditional prevalence}

We use the term conditional prevalence instead relative prevalence because this term is already used in the literature for prevalence trough translations by a prevalent set.
The set $\mathcal{H}_{0}$ of all liftings of circle diffeomorphisms is defined by an open condition ($F' >0$) in $\mathcal{H}$, the set of all liftings of degree 1 of $\mathbf{S}^{1}$, but it is clear that any subset $S \subset \mathcal{H}_{0}$ could not to be shy, since $\mathcal{H}_{0}$ is not invariant under the action of $G$ (see for example, Remark~\ref{PathFoliation}).

In a future work we will analyze properties of maps on the circle, and potentials, so the general theory that we develop here will be useful in this task. However our focus is to study properties of diffeomorphisms   so we introduce  the idea of \textbf{Conditional prevalence}.

Given a set $\mathcal{H}_{0} \subset \mathcal{H}$, we denote
$$G_{0}=\{\psi_{\lambda_{1}, w_{1}} \circ \psi_{\lambda_{2}, w_{2}} \circ \dots \circ \psi_{\lambda_{n}, w_{n}} \; | w_{i} \in \mathcal{H}_{0}\} \supset \mathcal{G}_{0}.$$

\begin{definition} \label{Conditional tranverse}
Given a Borel subset $S \subset \mathcal{H}_{0}$ we will say that a measure $\mu$ is condional $G$-transverse to $S$ in $\subset \mathcal{H}_{0}$, and denote \textbf{$\mu \pitchfork_{\mathcal{H}_{0}} S$} if:\\
a) There exists a compact $U \subset \mathcal{H}_{0}$, such that, $0< \mu(U) < \infty$,  ($\mu$ can be taken a compact supported measure in $\mathcal{H}_{0}$);\\
b) $\mu( \psi S)=0$, for any $\psi \in G_{0}$.
\end{definition}

In this case we say that $S$ is a conditional $G$-shy Borel subset, and, we call conditional $G$-\textbf{prevalent} a  subset $P$ of $\mathcal{H}_{0}$, such that $P^{c}$ is $G$-\textbf{shy}. In order to avoid complicated notations we will just say that $S$ is shy in $\mathcal{H}_{0}$, or $S$ is prevalent in $\mathcal{H}_{0}$.

It is easy to see that all the above theory works well in this setting, even the CCC and, the density of conditional prevalent sets, indeed in the proof of Axiom 3, we get, by contradiction, $B(w_{0},\delta) \subset S \subset \mathcal{H}_{0}$ and, $\psi_{\lambda, w}B(w_{0},\delta) \supset \mu(B(v_{0},\varepsilon)) \supset \supp \mu$, what shows that $\supp \mu$ is stable under $\psi_{\lambda, w}^{-1}$, then the conclusion $\mu(\psi_{\lambda, w} B(w_{0},\delta))=0$ is  still valid. We observe that the conditional convex convolution $(\mu * \nu)_{\lambda}$ is well defined since the parameter $\lambda$ can be chosen  in such way that  $\supp (\mu * \nu)_{\lambda}=K=\Phi(K_1 \times K_2) \subset \mathcal{H}_{0}$ for  $\Phi: \mathcal{H}^2 \to \mathcal{H}$ given by $\Phi(v,w)=\psi_{\lambda,w}v$ (taking $\lambda \in (0,1)$, for example), the same is true for CCC.

Another important fact, is that Axiom 1 implies that, if $\pi : \mathcal{H}_{0} \to Diff^{r}(\mathbf{S}^{1})$  given by $\pi(F)=F (\text{mod} \,1)$ is the canonical projection, then $\pi(P)$ is dense in $Diff^{r}(\mathbf{S}^{1})$,  for any prevalent $P \subset \mathcal{H}_{0}$.

\begin{definition}
A set $ \mathcal{O} \subset Diff^{r}(\mathbf{S}^{1})$ is called \textbf{prevalent}, if $\pi^{-1}(\mathcal{O}) \subset \mathcal{H}_{0}$ is a prevalent subset of $\mathcal{H}_{0}$.
\end{definition}

\section{Transversal invariant measures}

The main goal in this section is to get probabilities on $\mathcal{H}$ that are ``invariant" under the group action $G$.

The hard part to show that a certain given set $S$ is transversal to a Borel measure  $\mu$ is to show that $\mu( \psi S)=0$, for any $\psi \in G$. So we can formulate the following question: ``How one can obtain examples of $G$-invariant measures, that is, a Borel measure $\mu$ such that, for each Borel subset $S \subset \mathcal{H}$, with $\mu(S)=0$, we have $\mu( \psi_{\lambda, w} S)=0$, for any $\psi_{\lambda, w} \in \mathcal{G}$?"

\begin{theorem}\label{Invariant Measure}
Let $V=C^{0}([0,1])$ the topological vector space where we consider the uniform topology. Then, for each $n \in \mathbb{N}$, there exists a Borel measure with compact support $\mu_{n}$ in $V$, that is, $G^{1}$-invariant measure ($G$ is the group generated by $\mathcal{G}$ of reflections $\psi_{\lambda, w}$ with $\lambda \neq 1$). In particular $\mu_{n} \pitchfork S$ for every zero measure set $S$.
\end{theorem}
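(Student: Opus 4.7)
The plan is to produce $\mu_{n}$ as the push-forward of $n$-dimensional Lebesgue measure under an affine embedding of a cube into $V$. Pick $n+1$ affinely independent continuous functions $\phi_{0},\phi_{1},\ldots,\phi_{n}\in V=C^{0}([0,1])$ (the polynomial basis $\phi_{i}(x)=x^{i}$ works) and form the affine map $\Phi\colon\mathbb{R}^{n}\to V$ given by $\Phi(t_{1},\ldots,t_{n})=\phi_{0}+\sum_{i=1}^{n}t_{i}\phi_{i}$. Let $A=\Phi(\mathbb{R}^{n})$ be its image (an $n$-dimensional affine plane in $V$), $K=\Phi([-1,1]^{n})$ its compact restriction, and define $\mu_{n}:=\Phi_{*}(\leb_{n}|_{[-1,1]^{n}})$. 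This is a finite Borel measure with compact support $K$, and $0<\mu_{n}(K)=2^{n}<\infty$, so condition (a) of Definition~\ref{transverse} is immediate.

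The core task is to verify that every generator $\psi_{\lambda,w}\in\mathcal{G}$ sends $\mu_{n}$-null sets to $\mu_{n}$-null sets. A direct computation yields
$$\psi_{\lambda,w}(\Phi(t))=(1-\lambda)\phi_{0}+\lambda w+(1-\lambda)\sum_{i=1}^{n}t_{i}\phi_{i},$$
and I would split on whether $w\in A$. If $w=\Phi(t_{0})\in A$, then the right-hand side equals $\Phi((1-\lambda)t+\lambda t_{0})$, so $\psi_{\lambda,w}|_{A}$ is conjugate via $\Phi$ to the invertible affine map $T(t)=(1-\lambda)t+\lambda t_{0}$ of $\mathbb{R}^{n}$; since $T$ scales Lebesgue measure by the nonzero factor $|1-\lambda|^{n}$, it preserves Lebesgue-null sets, and pulling back through $\Phi$ gives $\mu_{n}(\psi_{\lambda,w}S)=0$ whenever $\mu_{n}(S)=0$. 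If $w\notin A$ and $\lambda\neq 0$, I would argue that $\psi_{\lambda,w}(A)\cap A=\emptyset$: any solution of $\psi_{\lambda,w}(\Phi(t))=\Phi(s)$ would rearrange to $w=\phi_{0}+\lambda^{-1}\sum_{i}(s_{i}-(1-\lambda)t_{i})\phi_{i}\in A$, a contradiction. Consequently $\psi_{\lambda,w}(S)\cap K=\emptyset$ for every $S\subset V$, and trivially $\mu_{n}(\psi_{\lambda,w}S)=0$. The remaining case $\lambda=0$ is handled by $\psi_{0,w}=\mathrm{id}$.

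Since preservation of $\mu_{n}$-null sets is stable under composition, the property automatically extends from the generators $\mathcal{G}$ to the whole group $G$, proving $\mu_{n}\pitchfork S$ for every $\mu_{n}$-null Borel $S$. The one step needing real care is the disjointness claim $\psi_{\lambda,w}(A)\cap A=\emptyset$ when $w\notin A$: it is a purely algebraic consequence of the affine independence of the $\phi_{i}$'s together with $\lambda\neq 0,1$, but it is precisely the point where the choice of a genuinely $n$-dimensional (not degenerate) parametrization $\Phi$ becomes essential, and where the ``$\lambda\neq 1$'' restriction in the definition of $\mathcal{G}$ pays off. Everything else reduces to bookkeeping about how affine maps act on Lebesgue measure.
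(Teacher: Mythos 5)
There is a fatal gap in the case $w\notin A$. Your claim ``consequently $\psi_{\lambda,w}(S)\cap K=\emptyset$ for every $S\subset V$'' does not follow from the disjointness $\psi_{\lambda,w}(A)\cap A=\emptyset$: that disjointness only controls the images of subsets of the affine plane $A$, while a $\mu_{n}$-null set $S$ is an arbitrary Borel subset of $V$ and need not lie in $A$ at all (any set disjoint from $K$ is automatically $\mu_{n}$-null). In fact your own disjointness computation produces a counterexample to the invariance of your measure: fix $w\notin A$ and $\lambda\neq 0,1$, and put $S=\psi_{\lambda,w}^{-1}(K)$. Since $\psi_{\lambda,w}^{-1}=\psi_{\lambda/(\lambda-1),w}$ is again a reflection with center $w\notin A$, the same algebra shows $S\cap A=\emptyset$, hence $\mu_{n}(S)=0$; but $\psi_{\lambda,w}(S)=K$ and $\mu_{n}(K)=2^{n}>0$. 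So the pushforward of Lebesgue measure under a finite-dimensional affine embedding is \emph{not} $G$-invariant, and the statement ``$\mu_{n}(S)=0\Rightarrow\mu_{n}(\psi S)=0$ for all $\psi\in G$'' fails already at the level of the generators. The obstruction is structural: any measure concentrated on a finite-dimensional affine subset of $V$ is killed by this mechanism, because a reflection whose center lies off that subset moves the subset entirely off itself, so the preimage of the support is a null set whose image has full mass.

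This is exactly where the paper's construction differs. The measure in the paper is not carried by a finite-dimensional plane; it is defined through the evaluation coordinates $f\mapsto f(k/n)$, $k=0,\dots,n$, with $\mu_{n}(B)$ expressed via the Lebesgue measures $\leb\bigl(\tfrac{k}{n}(B)\bigr)$ of the coordinate images. Nullity of $B$ is then witnessed by a single scalar coordinate, and a reflection $\psi_{\lambda,h}$ with \emph{arbitrary} center $h\in V$ acts on that coordinate as the affine map $t\mapsto(1-\lambda)t+\lambda h(k/n)$ of $\mathbb{R}$, which scales Lebesgue measure by $|1-\lambda|\neq 0$ and so preserves nullity — no case split on the position of $h$ is needed, which is precisely what your construction cannot achieve.
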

\begin{proof}
Define for each $x \in [0,1]$ the linear functional $x: C^{0} \to [0,1]$ given by
$$x(f)=f(x),$$
and consider the partition of $[0,1]$ given by $\{\frac{0}{n}, \frac{1}{n}, ...\frac{n}{n}\}$. Then, we can obtain the open map $\rho : C^{0} \to \mathbb{R}^{n+1}$:
$$\rho(f)=(\frac{0}{n}(f), \frac{1}{n}(f), ...\frac{n}{n}(f)).$$
Now, we consider $\leb$ the Lebesgue measure on $[0,1]$, that is, $\leb(A) =\int_{\mathbb{R}} \chi_{A}(x) dx$, and the product measure $\mu_{0}=\leb^{n+1}$ on $[0,1]^{n+1}$.  Thus we are able to define a Borel measure $\mu_{n}$ in $C^{0}$ by the push forward
\begin{align*}
\mu_{n}(B)&=\mu_{0} (\rho (B)) \\
&= \leb(\frac{0}{n}(B)) \cdot ... \cdot \leb(\frac{n-1}{n}(B)) \cdot\leb(\frac{n}{n}(B)) \\
&= \int_{\mathbb{R}} \chi_{\frac{0}{n}(B)}(x)dx \cdot ...\cdot \int_{\mathbb{R}}\chi_{\frac{n-1}{n}(B)}(x)dx \cdot \int_{\mathbb{R}}\chi_{\frac{n}{n}(B)}(x)dx.
\end{align*}
\begin{center}
\includegraphics[scale=0.20,angle=0]{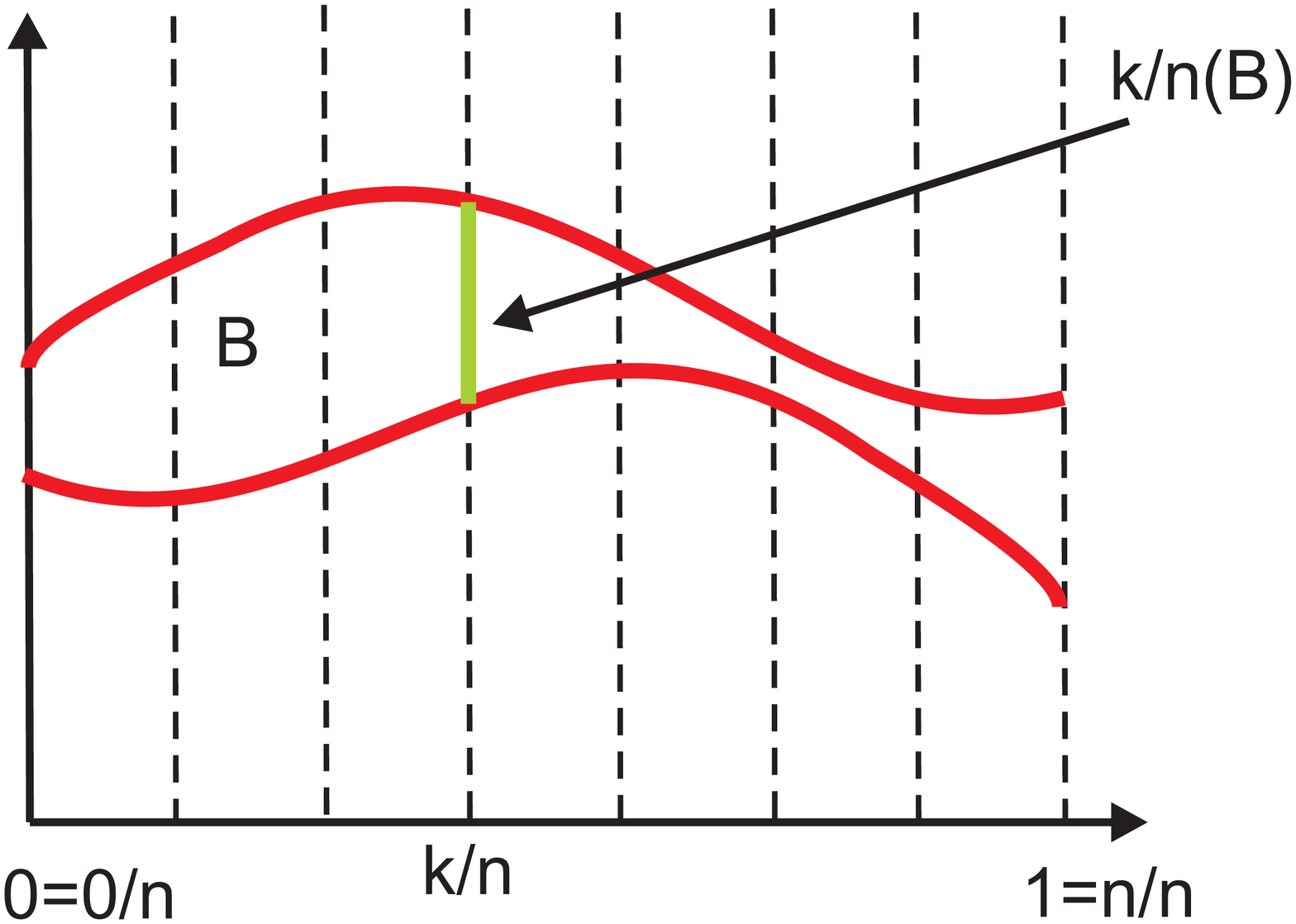}\\
\small{Push forward of the product measure.}\\
\end{center}
 Obviously, $\mu_{n}$ has compact support, we claim that $\mu_n$ is $G$-invariant. Indeed, suppose $\mu_{n}(A)=0$, so there exists $k$ such that, $\leb(\frac{k}{n}(A))=0$, and for any $\psi_{\lambda, h} \in \mathcal{G}$ we have
\begin{align*}
\leb(\frac{k}{n}(\psi_{\lambda, h}A))&=\leb(\frac{k}{n}((1-\lambda) A + \lambda h)) \\
&=\leb((1-\lambda) \frac{k}{n}(A) + \lambda \frac{k}{n}(h))) \\
&=\int_{\mathbb{R}} \chi_{(1-\lambda) \frac{k}{n}(A) + \lambda h(k/n)}(x)dx\\
&=\int_{\mathbb{R}} \chi_{(1-\lambda) \frac{k}{n}(A)}(y)dy, \; y=x -\lambda h(k/n)\\
&=(1-\lambda) \int_{\mathbb{R}} \chi_{\frac{k}{n}(A)}(z)dz, \; (1-\lambda)z=y\\
&=(1-\lambda)\leb(\frac{k}{n}(A))=0,
\end{align*}
thus $\mu_{n}(\psi_{\lambda, h} A)=0$.
\end{proof}


\section{Evaluation maps for periodic points} \label{evaluationSection}
In this section, and in the next one, we will consider $$\mathcal{H}_{0}=\text{Liftings}(Diff^{r}_{+}
(\mathbf{S}^{1})),$$ for $r \geq 2$, so the prevalence results are on this set. We need regularity $r \geq 2$ in order to apply evaluations technics. The main goal of this section is to provide the technical tools that we need for applications, more specifically we will construct and analyze properties of one dimensional probes, that is, finitely dimensional spaces that supports transversal measures.

For each fix $F,G \in \mathcal{H}_{0}$ we consider the path $\alpha_{\lambda}:\mathbb{R} \to \mathcal{H}$ given by $$\alpha_{\lambda}(x)=(1-\lambda)F(x)+ \lambda G(x).$$

We define the \textbf{$n$-evaluation map}, associated to $\alpha_{\lambda}$, as the multivalued function:
$$\Delta_{n}(x)=\{ \lambda \, | \, \alpha_{\lambda}^{n}(x)=x\text{ and } x \in S^{1} \},$$
which can be evetually empty.

We observe that the lifting property implies that $\Delta_{n}(x)$ can be extended as a time 1, periodic function in $\mathbb{R}$. Moreover, if $\alpha_{\lambda} \in \mathcal{H}_{0}$ then $\Delta_{n}(x)=\{\lambda\}$, what means that $\lambda=\Delta_{n}(x)$ is locally a $C^{r}$ function. To see this, we define the $C^{r}$ map $\varphi(x,\lambda)= \alpha_{\lambda}(x)$, and denote $\varphi^{n}(x,\lambda)=\varphi(\varphi(...,\lambda),\lambda)$, so $\alpha_{\lambda}^{n}(x)=\varphi^{n}(x,\lambda)$.
Differentiating with respect to $\lambda$ we get
$$\frac{\partial \varphi^{n}}{\partial \lambda}(x,\lambda)= \frac{1}{\frac{\partial \varphi }{\partial x}} \sum_{k=0}^{n-1} (G-F)(\varphi^{k})\frac{\partial \varphi^{n-k}}{\partial x} \neq 0,$$
because $\alpha_{\lambda} \in \mathcal{H}_{0}$, and $G-F \neq 0$.

To establish the domain of $\lambda=\Delta_{n}(x)$ we need to assume that $F$ has some fixed point, then we observe that $\alpha_{\lambda} \in \mathcal{H}_{0}$ for some interval $\lambda \in (a,b) \supset [0,1]$ (for example, for $F(x)=x + 0.1 + \frac{1}{2.1\pi} \sin(2\pi x)$ and $G(x)= x+ 0.2 \pi $, we get, $(a,b)=(-0.05,2.05)$ as the maximal interval for this property), then we define $D^{n}(F,G)=\Delta_{n}^{-1} (\Delta_{n} (S^{1}) \cap (a,b))$, because $\Delta_{n} (S^{1}) \cap (a,b)=\{\lambda \, | \, \alpha_{\lambda}(F) \in \mathcal{H}_{0},\text{ and there exists } x \, \text{ s.t. } \alpha_{\lambda}(F)^{n}(x)=x \}$.
Thus $\lambda=\Delta_{n}:D^{n}(F,G)\to (a,b)$ represents all the periodic points of period $n$, for $\alpha_{\lambda}$ in the following sense:

\emph{``$p \in S^{1}$ is a periodic point of period $n$ for the diffeomorphism $\alpha_{\lambda}$ with degenerate derivative $(\alpha_{\lambda}^{n})'(p)=1$ if, and only if, $\lambda=\Delta_{n}(p) \in (a,b)$ and $\Delta_{n}'(p)=0$"}.

We like to observe that some particular cases will be useful in the next sections:\\

\textbf{Type I -}  $F \in \mathcal{H}_{0}$ with fixed points, and $G=F+k$ $k \in (0,1)$.
In this case, $\alpha_{\lambda}(x)=(1-\lambda)F(x)+ \lambda G(x)= F(x) + \lambda k \in \mathcal{H}_{0}, \, \forall \lambda \in \mathbb{R}$. Thus, the evaluation $\lambda=\Delta_{n}$ is fully defined and the study of the hyperbolic periodic points reduces to the study of the critical points of the evaluation. More than that, applying an element $\psi \in G_{0}$, we get
$$ \psi(\alpha_{\lambda})(x)=\alpha_{(1-\delta)\lambda}(x)= \psi(F)(x)+ (1-\delta)\lambda k,$$
where, $\psi(F)=(1-\delta) F + \sum_{i=1}^{n}a_{i}G_{i} \; |  \delta \in \mathbb{R} -\{1\}, \; \sum_{i=1}^{n}a_{i} =\delta, \; G_{i} \in \mathcal{H}_{0}$. Thus, there are just two possibilities,  $\psi(\alpha_{\lambda}) \subset  \mathcal{H}_{0}$ if  $\psi(F) \in  \mathcal{H}_{0}$,  or $\psi(\alpha_{\lambda}) \subset \mathcal{H}- \mathcal{H}_{0}$ if  $\psi(F) \not\in  \mathcal{H}_{0}$.\\

\textbf{Type II -} $F,G \in \mathcal{H}_{0}$, $F$ with fixed points, and $\displaystyle \min_{x \in S^{1}}|G(x)-F(x)|=\sigma >0 $. In this case, $\alpha_{\lambda}$ is a $\lambda$-foliation of $\mathbb{R}^{2}$ (see Remark\ref{PathFoliation}, for example). Let $(a,b)$ be the maximal interval such that $\alpha_{\lambda} \in\mathcal{H}_{0}$ and $D^{n}(F,G)$ the domain of the evaluation map $\lambda=\Delta_{n}$, we need to compute the derivative $\frac{d \lambda}{dx}$. Using the notation $\varphi(x,\lambda)= \alpha_{\lambda}(x)$, and $\varphi^{n}(x,\lambda)=\varphi(\varphi(...,\lambda),\lambda)$, where $\lambda=\lambda(x)$, and $\alpha_{\lambda}^{n}(x)=x$, we get
$$1= \frac{\partial \alpha_{\lambda}^{n}}{\partial x}(x,\lambda)=\frac{\partial \varphi^{n}}{\partial x}(x,\lambda)= \frac{d\varphi^{n}}{dx} + \lambda'(x) \frac{d\varphi^{n}}{dx} \sum_{k=0}^{n-1} \frac{(G-F)(\varphi^{k})}{\frac{d\varphi^{k+1}}{dx}}.$$
Isolating $\lambda'$, and using $\varphi(x,\lambda)= \alpha_{\lambda}(x)$, we get
$$ \lambda'(x) = \frac{1- \frac{d\alpha_{\lambda}^{n}}{dx}}{\frac{d\alpha_{\lambda}^{n}}{dx} \sum_{k=0}^{n-1} (\frac{d\alpha_{\lambda}^{k+1}}{dx})^{-1}(G-F)(\alpha_{\lambda}^{k})}.$$

The main application of the evaluation map is to describe the behavior of the hyperbolic periodic points. Indeed, the formula above shows that for a diffeomorphism $\alpha_{\lambda}$, one has $\frac{d\alpha_{\lambda}^{n}}{dx}(x)=1$ only if $\lambda'(x)=0$ and $\lambda=\Delta_{n}(x)$.

For type II probes the action of $G$ is not trivial and the interval $(c,d)$ where $\psi(\alpha_{\lambda}) \in\mathcal{H}_{0}$ can be different of $(a,b)$. Anyway, the same conclusions are true in a possibly different domain. So the conditions $\psi(\alpha_{\lambda})^{n}(p)=p  \text{ and } \frac{d}{dx}\psi(\alpha_{\lambda})^{n}(p)=1,$  are equivalent to $\lambda=\Delta_{n}(p)  \text{ and } \Delta_{n}'(p)=0,$
because,
$$\frac{\partial}{\partial x} \psi(\alpha_{\lambda(x)})^{n}(x) + \frac{\partial}{\partial \lambda} \psi(\alpha_{\lambda(x)})^{n}(x) \lambda'(x)= 1,$$
thus,
$$
\lambda'(p)=\Delta_{n}'(p)=\frac{1-\frac{\partial}{\partial x} \psi(\alpha_{\lambda(p)})^{n}(p)}{
\frac{\partial}{\partial \lambda} \psi(\alpha_{\lambda(p)})^{n}(p)}.
$$

\section{Applications of prevalence on $\mathbf{S}^{1}$}

The first result we will get is about the non-degeneracy of the fixed points of a  diffeomorphism of $\mathbf{S}^{1}$. This result is contained in the Kupka-Smale theorem, but we want to get this result in our setting. We present a direct proof here which is  instructive, and, at the same time, shows a different choice of probe for getting the desired property.

We say that a fixed point $p$ is  non-degenerate (of order 1) if $F'(p) \neq 1$.

\begin{proposition}\label{ElemntaryDiff} The order preserving diffeomorphisms of $\mathbf{S}^{1}$ are generically nondegenerate, in particular they are hyperbolic.
\end{proposition}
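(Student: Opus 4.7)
Let $S := \{F \in \mathcal{H}_0 : \text{there exists } p \in \mathbb{R} \text{ with } F(p)-p \in \mathbb{Z} \text{ and } F'(p)=1\}$. Because $F' > 0$ already holds on $\mathcal{H}_0$, the complement of $S$ consists of lifts all of whose fixed points are hyperbolic, so it suffices to show that $S$ is conditionally shy in $\mathcal{H}_0$. I will do this using a single one-parameter probe---the simplest Type~I family of Section~\ref{evaluationSection}---together with Sard's theorem. No countable convolution is needed because only fixed points (not all periodic orbits) are involved.

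Fix any $F_0 \in \mathcal{H}_0$ and set $\alpha_\lambda := F_0 + \lambda$ for $\lambda \in [0,1]$. Adding a real constant to a lift affects neither the derivative nor the relation $F(x+1)=F(x)+1$, so $\alpha_\lambda \in \mathcal{H}_0$ for every $\lambda$. Let $\mu$ be the pushforward of Lebesgue measure on $[0,1]$ under the continuous map $\lambda \mapsto \alpha_\lambda$; then $\mu$ is a Borel probability supported on the compact arc $\{\alpha_\lambda : \lambda \in [0,1]\} \subset \mathcal{H}_0$, verifying part~(a) of Definition~\ref{Conditional tranverse}.

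For part~(b), fix $\psi \in G_0$; since $G_0$ is a group, $\psi^{-1} \in G_0$, and by the normal form of such elements (Section~\ref{GroupAction}) we may write
$$\psi^{-1}(v) = (1-\tilde\delta)\,v + \sum_{i=1}^{N} \tilde a_i\, w_i,\quad \tilde\delta \neq 1,\ \sum_i \tilde a_i = \tilde\delta,\ w_i \in \mathcal{H}_0.$$
Setting $H := (1-\tilde\delta)F_0 + \sum_i \tilde a_i w_i$ one computes $\psi^{-1}(\alpha_\lambda) = H + (1-\tilde\delta)\lambda$. If $H \notin \mathcal{H}_0$ then $\psi^{-1}(\alpha_\lambda) \notin \mathcal{H}_0$ for every $\lambda$, hence $\psi^{-1}(\alpha_\lambda) \notin S$ and $\mu(\psi S)=0$ trivially. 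Otherwise $H \in \mathcal{H}_0$, and $\psi^{-1}(\alpha_\lambda)$ admits a degenerate fixed point iff there exists $p$ with $H'(p)=1$ and $(1-\tilde\delta)\lambda \in \mathbb{Z} + p - H(p)$. The map $g(p) := p - H(p)$ is $C^r$ and $1$-periodic, and the set of bad $\lambda \in [0,1]$ is contained in
$$\bigcup_{n \in \mathbb{Z}} \frac{1}{1-\tilde\delta}\bigl( g(\{g'=0\}) + n \bigr).$$
By Sard's theorem, $g(\{g'=0\})$ has zero Lebesgue measure; this property is preserved under integer shifts and the nonzero affine rescaling by $1/(1-\tilde\delta)$, so $\mu(\psi S)=0$. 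Hence $\mu \pitchfork_{\mathcal{H}_0} S$, so $S$ is conditionally shy and $S^{c}$ is prevalent in $\mathcal{H}_0$, which gives the proposition.

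The main technical point, once the probe is chosen, is the identity $\psi^{-1}(\alpha_\lambda) = H + (1-\tilde\delta)\lambda$: it says that the translation-by-the-constant-function direction used to build the probe is preserved, up to the nonzero rescaling $1-\tilde\delta$, by every element of $G_0$. This is precisely what makes Sard's theorem applicable after an arbitrary reflection, and is the reason for choosing the probe direction to be the constant function $\mathbf{1}$.
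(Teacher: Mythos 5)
Your proof is correct, but it does not follow the route the paper takes for this proposition. The paper deliberately uses a Type II probe $\alpha_\lambda=(1-\lambda)F+\lambda H$ with $H$ a genuinely different diffeomorphism, solves the fixed-point condition for $\psi(\alpha_\lambda)$ to obtain an explicit evaluation map $\Delta(x)=\frac{x-F(x)}{H(x)-F(x)}+\cdots$, verifies that degenerate fixed points are critical points of $\Delta$, and then applies Sard; the stated purpose there is precisely to exhibit a different choice of probe. You instead take the Type I (constant-direction) probe $\alpha_\lambda=F_0+\lambda$, which is essentially the probe the paper reserves for Theorem~\ref{KS OrientPreservDiff}, specialized to period $n=1$. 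What your choice buys is explicitness: since every element of $G_0$ is affine with scalar part $1-\tilde\delta\neq 0$, the reflected family $\psi^{-1}(\alpha_\lambda)=H+(1-\tilde\delta)\lambda$ with $H=\psi^{-1}(F_0)$ is again a pure translation family; the dichotomy $H\in\mathcal{H}_0$ or $H\notin\mathcal{H}_0$ (invariant under adding constants) disposes of the case where the reflected path leaves $\mathcal{H}_0$, exactly as in the paper's Type I discussion in Section~\ref{evaluationSection}; and in the remaining case the bad parameter set sits inside an affine image of the set of critical values of the single $1$-periodic function $g(p)=p-H(p)$, so Sard applies with no computation of $\Delta'$. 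The paper's Type II computation is heavier but shows the method is not tied to the constant direction. Two points you use without comment are easily supplied from the paper: $G_0$ is closed under inversion because $\psi_{\lambda,w}^{-1}=\psi_{\frac{\lambda}{\lambda-1},w}$ keeps the same $w\in\mathcal{H}_0$ (inverse property in Proposition 1), and Sard for maps of one real variable needs only $C^1$, which $r\geq 2$ guarantees. Your formulation of $S$ via $F(p)-p\in\mathbb{Z}$ is also the correct pullback under $\pi$ of the circle-level degeneracy set, which is what the prevalence statement in $Diff^{r}(\mathbf{S}^{1})$ requires.
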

\begin{proof}
Let $S=\{F \in \mathcal{H}_{0}  \, | \, \exists p \in Fix(F), \; \frac{d}{dx}F(p)=1 \}$, and we must show that $S$ is shy, that is, the set of all diffeomorphisms without degenerate fixed point is prevalent.

The proof will be accomplish  by choosing a special one dimensional probe of type II, in $\mathcal{H}_{0}$. We consider the one dimensional subspace generated by $F$ and $H$,
$$\alpha(x,\lambda)=\psi_{\lambda,H}F=(1-\lambda)F + \lambda H, \; \lambda \in [0,1],$$
and consider $\mu$ the Lebesgue measure supported on the trace of $\alpha$, where $\mu(A)$ means $\mu(\{\lambda \, | \, \alpha_{\lambda} \in A\})$. We claim that $\mu(S \cap \alpha_{\lambda})=0$. We will not explain now this computations because it is a particular case of the next one ($\psi=Id$).

In order verify the condition of invariance we need to show that $\mu(\psi(S) \cap \alpha)=0$,  or equivalently, $\mu(S \cap \psi(\alpha))=0$, for all $\psi \in G$.
Remember that  $\psi(F)=(1-\delta) F + \sum_{i=1}^{n}a_{i}G_{i} \; |  \delta \in \mathbb{R} -\{1\}, \; \sum_{i=1}^{n}a_{i} =\delta, \; G_{i} \in \mathcal{H}_{0}$, so we have
$ \displaystyle \psi(\alpha)(x,\lambda)= (1-\delta) F(x) +  \lambda(1-\delta) (H(x) - F(x)) + \sum_{i=1}^{n}a_{i}G_{i}(x).$
Now, $\psi(\alpha)(p,\lambda) =p$ implies that, $1=(1-\delta) F'(p) +  \lambda(1-\delta) (H'(p) - F'(p)) + \sum_{i=1}^{n}a_{i}G'_{i}(p).$
Thus,
$$\Delta(x)=\frac{x-F(x)}{H(x) -F(x)} + \frac{1}{H(x) -F(x)} \frac{\delta}{1-\delta} \left(\frac{1}{\delta} \sum_{i=1}^{n}a_{i}G_{i}(x)  - x \right).$$

Substituting the two formulas above in $\Delta'(x)$ we conclude  that $\Delta'(p)=0$, and, proceeding as before, we get
\begin{align*}
\{\lambda \, | \, S \cap \psi(\alpha_{\lambda})\} & = \{\lambda \in [0,1] \, | \, \exists p \in Fix(\psi(\alpha_{\lambda})(\cdot, \lambda)), \; \frac{d}{dx}\psi(\alpha_{\lambda})(p, \lambda)=1 \}\\
& \subseteq \Delta(\{ p \, | \, \Delta'(p)=0 \}),\\
\end{align*}
which has zero Lebesgue measure by Sard's theorem.
So $\mu \pitchfork S$, that is $S$ is shy.
\end{proof}

Applying thess technics we can prove a kind of Kupka-Smale theorem for  non-degeneracy of the periodic points of a preserving order diffeomorphism of $\mathbf{S}^{1}$.

We remember that a periodic point $p$ of order $n$ ($F^{n}(p)=p$) is said to be hyperbolic, if $|\frac{d}{dx}F^{n}(p)| \neq 1$; for a order preserving transformation  this is equivalent to say that every iterate $F^{n}$ has no degenerate fixed points.  A diffeomorphism $F$ is said K-S (Kupka-Smale type) if all periodic points are hyperbolic.

\begin{theorem}\label{KS OrientPreservDiff} The order preserving diffeomorphisms of $\mathbf{S}^{1}$ are generically K-S.
\end{theorem}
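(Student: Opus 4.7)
The plan is to parallel the proof of Proposition~\ref{ElemntaryDiff}, but with the $n$-evaluation map $\Delta_n$ from Section~\ref{evaluationSection} replacing the order-$1$ evaluation, and then assemble the result for all periods via Axiom 3. For each $n \geq 1$, define
$$S_{n}=\{F \in \mathcal{H}_{0} \, | \, \exists\, p \in S^{1},\; F^{n}(p)=p \text{ and } (F^{n})'(p)=1\}.$$
The set of K-S diffeomorphisms is exactly $\bigcap_{n \geq 1} S_{n}^{c}$. Since a countable intersection of prevalent subsets of $\mathcal{H}_{0}$ is prevalent (Axiom 3, established in Theorem~\ref{countableSHY}), it suffices to show that each $S_{n}$ is shy in $\mathcal{H}_{0}$.

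To prove $S_{n}$ is shy, I would fix $F \in \mathcal{H}_{0}$ with a fixed point and $H \in \mathcal{H}_{0}$ with $\min_{x}|H(x)-F(x)| = \sigma > 0$, and use the type II one-dimensional probe $\alpha_{\lambda}(x)=(1-\lambda)F(x)+\lambda H(x)$. Let $\mu$ denote the push-forward of Lebesgue measure on the maximal interval $(a,b) \supset [0,1]$ on which $\alpha_{\lambda} \in \mathcal{H}_{0}$, supported on the compact piece of the trace of $\alpha$ corresponding to $\lambda \in [0,1]$, so $0 < \mu(\alpha([0,1])) < \infty$. By the Section~\ref{evaluationSection} discussion for type II probes, $\alpha_{\lambda}$ has a degenerate periodic point of order $n$ at $p$ exactly when $\lambda=\Delta_{n}(p)$ and $\Delta_{n}'(p)=0$, where $\Delta_{n}\colon D^{n}(F,H) \to (a,b)$ is $C^{r}$ (here $r \geq 2$ is essential for Sard). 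Hence
$$\{\lambda \in [0,1] \,|\, \alpha_{\lambda} \in S_{n}\} \subseteq \Delta_{n}\bigl(\{p \in S^{1} \,|\, \Delta_{n}'(p)=0\}\bigr),$$
which has Lebesgue measure zero by Sard's theorem. Thus $\mu(S_{n} \cap \alpha) = 0$.

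For the invariance clause (b) of Definition~\ref{Conditional tranverse}, given any $\psi \in G_{0}$ of the form $\psi(F) = (1-\delta)F + \sum_{i=1}^{m} a_{i}G_{i}$ with $\sum a_{i} = \delta$, the same formal structure applies to
$$\psi(\alpha_{\lambda})(x) = (1-\delta)F(x) + \lambda(1-\delta)(H(x)-F(x)) + \sum_{i=1}^{m} a_{i} G_{i}(x),$$
which remains a one-parameter affine family in $\lambda$. On the (possibly shifted) maximal interval $(c,d)$ where $\psi(\alpha_{\lambda}) \in \mathcal{H}_{0}$, one gets a corresponding $n$-evaluation map $\Delta_{n}^{\psi}$, still $C^{r}$, and the characterization of Section~\ref{evaluationSection} again identifies degenerate order-$n$ periodic points with critical points of $\Delta_{n}^{\psi}$. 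Applying Sard once more yields $\mu(\psi(\alpha) \cap S_{n}) = 0$, i.e.\ $\mu(\psi(S_{n})) = 0$. Hence $\mu \pitchfork_{\mathcal{H}_{0}} S_{n}$, so $S_{n}$ is shy, and Axiom 3 concludes the proof.

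The main obstacle is the bookkeeping in the last paragraph: one must verify that $\psi(\alpha_{\lambda})$ really is a legitimate type II probe with a nonempty range of $\lambda$'s in $\mathcal{H}_{0}$ (so that $\Delta_{n}^{\psi}$ has a genuine domain), and that the formula for $(\Delta_{n}^{\psi})'$ carries through with $\psi(H)-\psi(F)$ in place of $H-F$ and remains nonvanishing, so that the equivalence ``critical point of $\Delta_{n}^{\psi}$ $\iff$ degenerate periodic point" is preserved under the reflection. Once that is checked, the Sard step is immediate from $r \geq 2$.
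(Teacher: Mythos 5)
Your proposal is correct and follows the paper's overall strategy exactly: decompose the non--K-S set as $\bigcup_n S_n$, show each $S_n$ is shy via a one-dimensional probe and the $n$-evaluation map $\Delta_n$ of Section~\ref{evaluationSection}, kill the degenerate parameters by Sard (using $r\geq 2$), and finish with Axiom 3 (Theorem~\ref{countableSHY}). The one substantive difference is your choice of probe: you use a type II probe ($\min_x|H-F|=\sigma>0$), whereas the paper deliberately uses a type I probe $H=F+k$, $k\in(0,1)$, so that $\alpha_\lambda=F+\lambda k\in\mathcal{H}_0$ for all $\lambda$ and, more importantly, $\psi(\alpha_\lambda)=\psi(F)+(1-\delta)\lambda k$ is again a full translate family. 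This yields the clean dichotomy the paper exploits: either $\psi(F)\in\mathcal{H}_0$ and the whole reflected family lies in $\mathcal{H}_0$ with the evaluation machinery valid globally, or $\psi(F)\notin\mathcal{H}_0$ and the reflected family misses $S_n\subset\mathcal{H}_0$ entirely. With your type II probe the reflected family $\psi(\alpha_\lambda)$ lies in $\mathcal{H}_0$ only on some (possibly empty, possibly shifted) interval $(c,d)$, which is exactly the bookkeeping you flag as the ``main obstacle''; it does go through (the direction becomes $(1-\delta)(H-F)$, still nowhere zero, so the denominator in $(\Delta_n^{\psi})'$ is a sum of same-sign nonzero terms, and outside $(c,d)$ the intersection with $S_n$ is empty), and indeed Section~\ref{evaluationSection} asserts the type II equivalence ``degenerate periodic point $\iff$ critical point of $\Delta_n$'' in the possibly different domain. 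So your argument is sound, but the paper's type I choice is what makes the $G$-invariance step trivial; the type II probe is what the paper reserves for Proposition~\ref{ElemntaryDiff} and the quantitative estimate of Theorem~\ref{QKSTheorem}.
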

\begin{proof}
Let $S_{n}=\{F \in \mathcal{H}_{0}  \, | \, \exists p \in Fix^{n}(F), \; \frac{d}{dx}F^{n}(p)=1 \}$ be the set of diffeomorphisms such that the  iterate $F^{n}$ has at least one degenerate fixed point. We must to show that $S_{n}$ is shy, so $$S= \bigcup_{n=1}^{\infty} S_{n}= \{F \in \mathcal{H}_{0}  \, | \, \text{has some degenerate periodic point.}\},$$ will be shy, that is, the set of all  diffeomorphisms  which preserve order and also without degenerate periodic  point $\mathcal{H}_{0}- S$, is prevalent.

The proof will follow easily by  choosing a one dimensional probe of type I,
$$\alpha_{\lambda}(.)=\psi_{\lambda,H}F=(1-\lambda)F + \lambda H= F + \lambda k , \; \lambda \in [0,1],$$
and denote $\mu$ the Lebesgue measure supported on the trace of $\alpha$, where $\mu(A)$ means $\mu(\{\lambda \, | \, \alpha_{\lambda} \in A\})$.

We claim that $\mu(\psi(S_{n}) \cap \alpha_{\lambda})=0$,  or, equivalent, $$\mu(\{\lambda|S_{n} \cap \psi(\alpha_{\lambda})\neq \emptyset\})=0,$$ for all $\psi \in G$.
In order to see this, remember that from Section \ref{evaluationSection} the conditions $$\psi(\alpha_{\lambda})^{n}(p)=p  \text{ and } \frac{d}{dx}\psi(\alpha_{\lambda})^{n}(p)=1,$$
are equivalent to, $\lambda=\Delta_{n}(p)  \text{ and } \Delta_{n}'(p)=0,$ thus,
$$
\lambda'(p)=\Delta_{n}'(p)=\frac{1-\frac{\partial}{\partial x} \psi(\alpha_{\lambda(p)})^{n}(p)}{
\frac{\partial}{\partial \lambda} \psi(\alpha_{\lambda(p)})^{n}(p)}.
$$

We have two possibilities, if $\psi(F) \not\in \mathcal{H}_{0}$   then also $\psi(\alpha_{\lambda}) \not\in \mathcal{H}_{0}$, so  $\psi(\alpha_{\lambda }) \cap S_{n}$ has zero measure. Otherwise, if  $\psi(F) \in \mathcal{H}_{0}$,   then also $\psi(\alpha_{\lambda}) \in \mathcal{H}_{0}$.

Proceeding as before, we get
$$
\{\lambda \, | \, S_{n} \cap \psi(\alpha_{\lambda}) \}= \Delta_{n}(\{ p \, | \, \Delta_{n}'(p)=0 \}),
$$
that has zero Lebesgue measure by Sard's theorem.

It's clear that is enough to show the result for a single
$\psi\in  G$. So $\mu \pitchfork S_{n}$, that is, $S_{n}$ is shy.
\end{proof}

There are several nice papers  in the last years about properties of one-parametric families of circle diffeomorphisms. Several results of this kind can be restated in  our prevalence point of view. For example,  M. Tsujii \cite{TS2} has considered one-parametric families, $\{f_t = f + t\, | \, t \in S^1\}$ of orientation preserving circle diffeomorphisms, and he defines the following property: $f$ satisfies $(*)_{\beta}$, if
$$\text{ there exist finitely many }p,q \in \mathbb{Z},\text{ such that }|\rho(f) -\frac{p}{q}| < 1/ q^{2+\beta}.$$

J-C. Yoccoz \cite{Y} has proved that this condition implies that $f$ is $C^{r-2}$ conjugated to a rigid rotation $R_{\rho(f)}$, provided $f$ is $C^{r}$, for $r \geq 3$, and, $0< \beta< 1$.

M. Tsujii proved that the set
$$S_{\beta}=\{ t \in S^1 \, | \, \rho(f_t) \not\in \mathbb{Q}\text{ and } f_t \text{ not satisfy } (*)_{\beta}\},$$
has Lebesgue measure zero, since $df$ is of bounded variation.

 Using this property we can prove the next result. We note that  this application is, in a certain way, similar to the spirit of the Remark 5 in \cite{TS2}, but, not exactly the same (in our setting).
\begin{theorem}\label{Tsujii Version} A $C^{r}$ order preserving diffeomorphism of $\mathbf{S}^{1}$ is, generically, conjugated to a rigid rotation $R_{\rho}$, or, it has a rational rotation number, provided that $r \geq 3$.
\end{theorem}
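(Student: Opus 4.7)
The plan is to show that the set
\[
S := \{ F \in \mathcal{H}_{0} \, | \, \rho(F) \notin \mathbb{Q} \text{ and } F \text{ is not conjugate to a rigid rotation} \}
\]
is conditionally shy in $\mathcal{H}_{0}$; its complement in $\mathcal{H}_{0}$ is then the desired prevalent set, since it contains every $F$ with rational rotation number together with every $F$ that is conjugate to some $R_{\rho}$. Fix an arbitrary $F_{0} \in \mathcal{H}_{0}$ and a nonzero constant $k \in (0,1)$, and consider the Type I probe
\[
\alpha_{\lambda} := \psi_{\lambda, F_{0}+k}(F_{0}) = F_{0} + \lambda k, \qquad \lambda \in [0,1].
\]
Because $F+c \in \mathcal{H}_{0}$ iff $F \in \mathcal{H}_{0}$ for every real $c$ (adding a constant preserves both the lifting identity and strict positivity of the derivative), the trace of $\alpha$ lies in $\mathcal{H}_{0}$, and the push-forward $\mu$ of Lebesgue measure under $\lambda \mapsto \alpha_{\lambda}$ is a compactly supported Borel measure on $\mathcal{H}_{0}$.

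For any $\psi \in G_{0}$, write $\psi(v) = (1-\delta) v + \sum_{i=1}^{m} a_{i} w_{i}$ with $\delta \in \mathbb{R} - \{1\}$, $\sum a_{i} = \delta$ and $w_{i} \in \mathcal{H}_{0}$; then
\[
\psi(\alpha_{\lambda}) = \psi(F_{0}) + (1-\delta)\lambda k.
\]
If $\tilde F := \psi(F_{0}) \notin \mathcal{H}_{0}$, then by the same invariance under constant translations $\psi(\alpha_{\lambda}) \notin \mathcal{H}_{0} \supseteq S$ for every $\lambda$, so $\mu(\{\lambda : \psi(\alpha_{\lambda}) \in S\}) = 0$ vacuously. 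Otherwise $\tilde F \in \mathcal{H}_{0}$ is $C^{r}$ with $r \geq 3$, so $d\tilde F$ has bounded variation, and the family $\{\tilde F + t\}_{t \in \mathbb{R}}$ is exactly a Tsujii family with base $\tilde F$. Fixing any $\beta \in (0,1)$, the Tsujii statement recalled above yields that
\[
\tilde S_{\beta} := \{ t \in \mathbb{R} \, | \, \rho(\tilde F + t) \notin \mathbb{Q} \text{ and } \tilde F + t \text{ does not satisfy } (*)_{\beta} \}
\]
has Lebesgue measure zero. By Yoccoz's theorem, whenever $\tilde F + t$ satisfies $(*)_{\beta}$ it is $C^{r-2}$-conjugate to $R_{\rho(\tilde F + t)}$; hence $\{ t \, | \, \tilde F + t \in S \} \subseteq \tilde S_{\beta}$ is Lebesgue null. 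Finally, $t = (1-\delta)\lambda k$ is a linear bijection with nonzero slope $(1-\delta)k$, so the pull-back $\{ \lambda \in [0,1] \, | \, \psi(\alpha_{\lambda}) \in S \}$ also has Lebesgue measure zero; since $G_{0}$ is a group, ranging $\psi$ over $G_{0}$ in $\psi(\alpha_{\lambda}) \in S$ is equivalent to ranging $\psi$ over $G_{0}$ in $\alpha_{\lambda} \in \psi(S)$, and we conclude $\mu(\psi(S)) = 0$ for every $\psi \in G_{0}$.

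Combining both cases gives $\mu \pitchfork_{\mathcal{H}_{0}} S$, so $S$ is conditionally shy and the theorem follows. The main point of the argument — and where all the conceptual work is concentrated — is the observation that applying an arbitrary reflection $\psi \in G_{0}$ to a Type I probe yields another one-parameter affine family of the Tsujii form $\tilde F + t$, with $\tilde F = \psi(F_{0})$ and $t$ a nontrivial linear reparametrization of $\lambda$. Once this affine compatibility is seen, the one-parameter measure-zero result of Tsujii and the linearization theorem of Yoccoz combine to deliver $G_{0}$-invariance of the null set without requiring a separate construction of a transverse measure for each $\psi$.
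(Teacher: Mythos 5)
Your proposal is correct and follows essentially the same route as the paper: the same Type I probe $F_{0}+\lambda k$ with Lebesgue measure on its trace, the same identity $\psi(\alpha_{\lambda})=\psi(F_{0})+(1-\delta)\lambda k$ splitting into the cases $\psi(F_{0})\notin\mathcal{H}_{0}$ (trivial) and $\psi(F_{0})\in\mathcal{H}_{0}$ (apply Tsujii's one-parameter theorem again), combined with Yoccoz's linearization. The only cosmetic difference is that you take the shy set to be $S$ (irrational rotation number, not conjugate to a rotation) and use Yoccoz inside the argument to embed it in the Tsujii null set $\tilde S_{\beta}$, whereas the paper shows $S_{\beta}$ itself is shy and invokes Yoccoz afterwards; the content is the same.
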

\begin{proof}
We just have to choose a particular one-dimensional probe of type I,
$$\alpha_{\lambda}(x)=\psi_{\lambda,H}F=(1-\lambda)F + \lambda H= F + \lambda k , \; \lambda \in \mathbb{R},$$
$0<k<1$ and, as before, we denote  by $\mu$ the Lebesgue measure supported on the trace of $\alpha_{\lambda}$, where $\mu(A)$ means $\mu(\{\lambda \, | \, \alpha_{\lambda} \in A\}$). Since the Lebesgue measure is sigma finite we get from Tsujii's theorem, that $\mu(S_{\beta})=0$, where
$$S_{\beta}=\{ F \in \mathcal{H}_{0} \, | \, \rho(\pi(F)) \not\in \mathbb{Q}\text{ and } \pi(F) \text{ not satisfy } (*)_{\beta}\}.$$

We claim that $\mu(\psi(S_{\beta}) \cap \alpha_{\lambda})=0$,  or, equivalent, $\mu(S_{\beta} \cap \psi(\alpha_{\lambda}))=0$, for all $\psi \in G_{0}$. But this is a simple consequence of the fact that the dilatation map is absolutely continuous, with respect to the Lebesgue measure. Remember that
\begin{align*}
\psi(\alpha_{\lambda})(x,\lambda)&= (1-\delta) (F(x) + \lambda k) + \sum_{i=1}^{n}a_{i}G_{i}(x)\\
&= \psi(F) +  \lambda(1-\delta)k.
\end{align*}
We assume that $\psi(F)=(1-\delta) F(x) + \sum_{i=1}^{n}a_{i}G_{i}(x) \in \mathcal{H}_{0}$ (otherwise the path has zero measure) is of bounded variation too. So we can use Tsujii's theorem again to get $\mu(\psi(S_{\beta}) \cap \alpha_{\lambda})=0$.

Thus, $\mu \pitchfork S_{\beta}$, what means that $S_{\beta}$ is shy.
\end{proof}

Of course, the projection of the dense set $S_{\beta}^{c}$ is dense in $Diff^{r}(S^{1})$ (see Axiom~1). In \cite{TS2}, Tsujii obtains this result as a corollary of his main theorem.

\section{Hyperbolicity estimates}
In this section we will analyze a sightly different problem, that is, to make a quantitative analysis of the  hyperbolicity of a $C^{r}$, diffeomorphism, $r\geq 2$ (we need $r\geq 2$ in order to get an $C^{1}$ evaluation map). In Theorem~\ref{KS OrientPreservDiff} we prove that the orientation preserving diffeomorphisms are prevalently K-S, through the use of an special parametric family. If we use the quantitative measure of hyperbolicity, as in \cite{OY} Definition 7.6,
$$E_{\gamma}^{n}(F)=\{ x \in S^{1}\, | \, x \in Fix(F^{n}), \|dF^{n}(x) - 1\| \leq \gamma\},$$
for $\gamma >0$, then we can formulate the following problem:

Fix $F \in \mathcal{H}_{0}$ and choose $H \in A^{F} \subset \mathcal{H}_{0}$, given by
$A^{F}=\{H \in \mathcal{H}_{0} \, | \, H-F \neq 0\}.$ Consider the path of liftings of circle maps of type II, $\alpha_{\lambda}(F)(x)=(\psi_{\lambda,H}F) (x)=F(x) + \lambda (H-F)(x), \; \lambda \in \mathbb{R}.$\\

\emph{We want to estimate the Lebesgue measure of $$Z_{\gamma}^{n}(F)=\{ \lambda(x) \, | \, x \in E_{\gamma}^{n}(\alpha_{\lambda}(F))\text{ and } \alpha_{\lambda}(F) \in \mathcal{H}_{0}\}$$ where $\lambda(x)=\Delta_{n}(x)=\{\lambda \, | \,\alpha_{\lambda}(F)^{n}(x)=x \}$, is the  multi-valuated map introduced in the Section\ref{evaluationSection}.}
\begin{center}
\includegraphics[scale=0.30,angle=0]{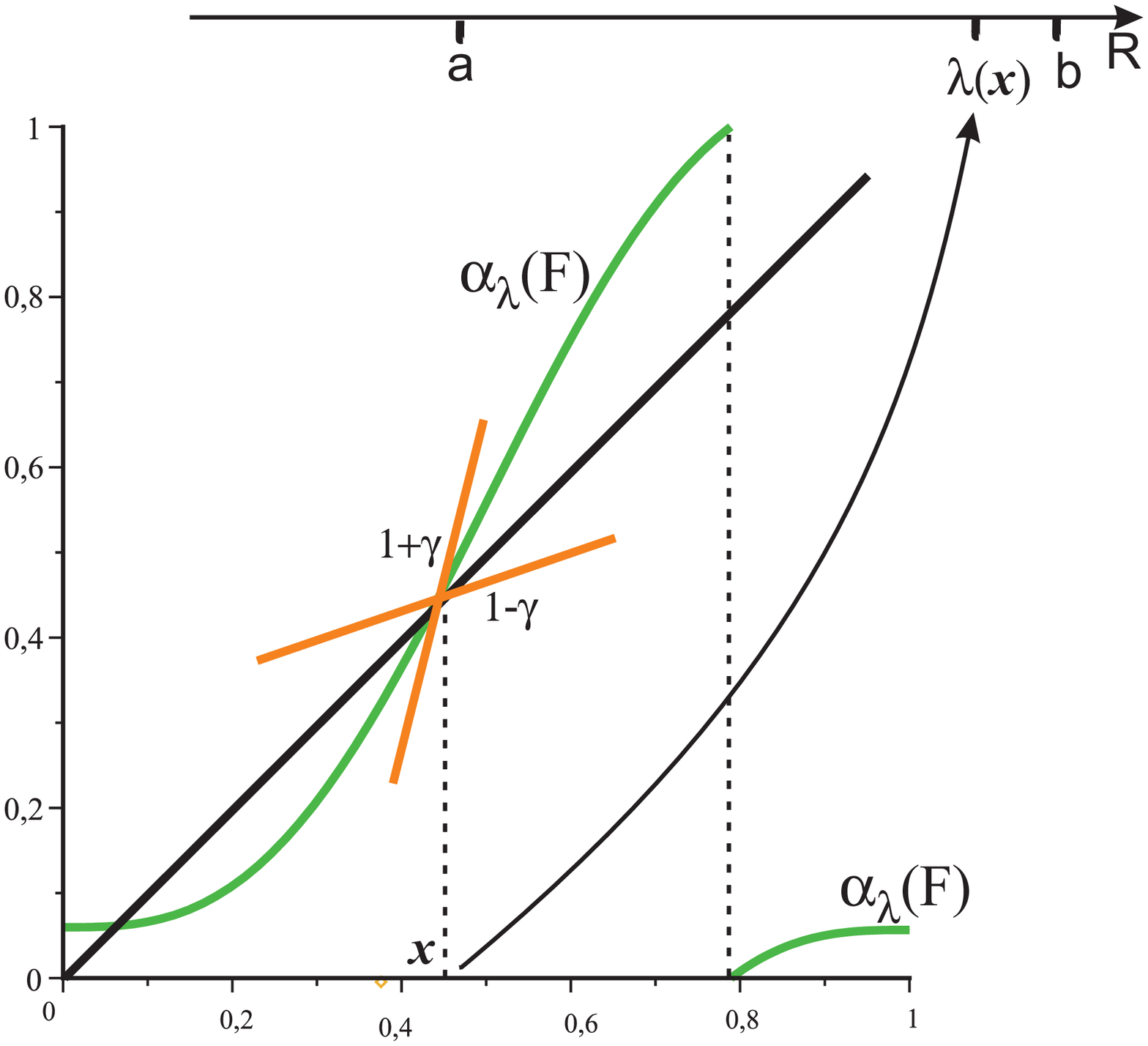}\\
\small{The set $Z_{\gamma}^{n}(F)$.}\\
\end{center}
We observe that the problem is well posed because the map $\Delta_{n}(x)$ is 1-periodic (otherwise such measure could be always $\infty$ or always 0), since $F$ and $H$ are liftings of circle diffeomorphisms. So this claim motivate the quantitative prevalent K-S theorem \textbf{(Q-K-S)}:

\begin{theorem}\label{QKSTheorem} Fix $F \in \mathcal{H}_{0}$ and choose $H \in A^{F}$, where $$A^{F}= \{ H \in \mathcal{H}_{0} \, | \, \inf_{x \in S^{1}} |H - F| =\sigma > 0\}.$$ Consider the path of circle diffeomorphisms
$\alpha_{\lambda}(F)(x),$
as above, and $\lambda(x)=\Delta_{n}(x)=\{\lambda \, | \,\alpha_{\lambda}(F)^{n}(x)=x \}$ the associated evaluation map. If $m$ is the Lebesgue measure in $\mathbb{R}$ then
$$m(Z_{\gamma}^{n}(F)) \leq c_{n} \frac{\gamma}{\sigma},$$
for some positive constant $c_{n}$ the depends of $\displaystyle \min_{ x \in \bigcup E_{\gamma}^{n}(\alpha_{\lambda}(F)) } \alpha_{\lambda}(F)'(x)$.
\end{theorem}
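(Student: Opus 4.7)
The plan is to bound $|\Delta_{n}'(x)|$ pointwise on the set of ``near-degenerate'' periodic points and then apply a change-of-variables estimate on the evaluation map $\Delta_{n}\colon S^{1}\to\mathbb{R}$. The starting point is the explicit formula for the derivative of a type II evaluation map derived in Section~\ref{evaluationSection}, which, for the path $\alpha_{\lambda}(F)(x) = F(x)+\lambda(H-F)(x)$, reads
$$
\Delta_{n}'(x) \;=\; \frac{1-(\alpha_{\lambda}^{n})'(x)}{(\alpha_{\lambda}^{n})'(x)\,\sum_{k=0}^{n-1}\bigl((\alpha_{\lambda}^{k+1})'(x)\bigr)^{-1}(H-F)(\alpha_{\lambda}^{k}(x))}.
$$
Evaluated on $x\in E_{\gamma}^{n}(\alpha_{\lambda}(F))$, the numerator is bounded by $\gamma$ directly from the definition of $E_{\gamma}^{n}$.

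The main work is to bound the denominator below by a quantity of order $\sigma$. First, since $H,F\in\mathcal{H}_{0}$ are continuous and $\inf_{x\in S^{1}}|H-F|=\sigma>0$, the function $H-F$ never vanishes and hence has constant sign on $S^{1}$; up to replacing $H-F$ by $F-H$ we may assume $(H-F)(y)\geq\sigma$ for every $y$, so the terms of the sum have a common sign and no cancellation occurs. Next, letting $m^{\ast}=\min_{x\in\bigcup E_{\gamma}^{n}(\alpha_{\lambda}(F))}\alpha_{\lambda}(F)'(x)>0$, each derivative $\alpha_{\lambda}'$ is bounded below by $m^{\ast}$ along the orbit under consideration; writing the full product as $(\alpha_{\lambda}^{n})'(x)=(\alpha_{\lambda}^{k+1})'(x)\cdot\prod_{j=k+1}^{n-1}\alpha_{\lambda}'(\alpha_{\lambda}^{j}(x))$ and using that $(\alpha_{\lambda}^{n})'(x)\in[1-\gamma,1+\gamma]$ on $E_{\gamma}^{n}$, one obtains an upper bound $(\alpha_{\lambda}^{k+1})'(x)\leq(1+\gamma)(m^{\ast})^{-(n-k-1)}$ and therefore a lower bound $((\alpha_{\lambda}^{k+1})'(x))^{-1}\geq(1+\gamma)^{-1}(m^{\ast})^{n-k-1}$. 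Summing yields a denominator of the form $\bigl|(\alpha_{\lambda}^{n})'(x)\bigr|\cdot \sigma\cdot\tfrac{1}{1+\gamma}\sum_{j=0}^{n-1}(m^{\ast})^{j}$, which is $\geq \tilde c_{n}\,\sigma$ for some $\tilde c_{n}=\tilde c_{n}(m^{\ast},n)>0$. Combining this with the numerator estimate gives the pointwise bound $|\Delta_{n}'(x)|\leq c_{n}\,\gamma/\sigma$ on the relevant set, for $c_{n}=1/\tilde c_{n}$.

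To conclude, introduce
$$
\widetilde E_{\gamma}^{n}(F)=\bigl\{x\in S^{1}\mid \alpha_{\Delta_{n}(x)}(F)\in\mathcal{H}_{0},\;x\in E_{\gamma}^{n}(\alpha_{\Delta_{n}(x)}(F))\bigr\},
$$
so that by definition $Z_{\gamma}^{n}(F)=\Delta_{n}(\widetilde E_{\gamma}^{n}(F))$. Because $\Delta_{n}$ is a $C^{1}$ map on the interior of its domain, the area formula (or simply the monotone covering by intervals where $\Delta_{n}$ is locally injective, since $1$-periodicity in $x$ bounds the number of branches) gives
$$
m(Z_{\gamma}^{n}(F))\;\leq\;\int_{\widetilde E_{\gamma}^{n}(F)}|\Delta_{n}'(x)|\,dx\;\leq\;\frac{c_{n}\,\gamma}{\sigma}\cdot m(\widetilde E_{\gamma}^{n}(F))\;\leq\;c_{n}\,\frac{\gamma}{\sigma},
$$
using that $\widetilde E_{\gamma}^{n}(F)\subseteq S^{1}$ has Lebesgue measure at most $1$ (absorbing this factor and the $(1+\gamma)$ into $c_{n}$ if needed).

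The main obstacle here is the denominator bound: one has to rule out cancellations in the alternating-looking sum and simultaneously keep uniform control over the partial products $(\alpha_{\lambda}^{k+1})'(x)$ along the whole periodic orbit. Both are taken care of by the constant sign of $H-F$ (guaranteed by $\sigma>0$) and by the hypothesis $(\alpha_{\lambda}^{n})'(x)\approx 1$ on $E_{\gamma}^{n}$, which forces the intermediate derivatives to stay in the range controlled by $m^{\ast}$.
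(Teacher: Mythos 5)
Your proposal is correct and follows essentially the same route as the paper: the same derivative formula for the type II evaluation map, the bound numerator $\leq\gamma$ and denominator $\geq(\text{const depending on }\min\alpha_{\lambda}')\cdot\sigma$, and the $C^{1}$ area-type inequality $m(\Delta_{n}(E))\leq\int_{E}|\Delta_{n}'|\,dm$ over a domain of measure at most $1$. The only difference is bookkeeping: the paper combines $(\alpha_{\lambda}^{n})'$ with the sum via the chain-rule identity $(\alpha_{\lambda}^{n})'(x)\bigl((\alpha_{\lambda}^{k+1})'(x)\bigr)^{-1}=(\alpha_{\lambda}^{n-k-1})'(\alpha_{\lambda}^{k+1}(x))\geq u^{n-k-1}$, whereas you bound the two factors separately using the trap $(\alpha_{\lambda}^{n})'\in[1-\gamma,1+\gamma]$, which yields the same estimate up to the constant.
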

\begin{proof} Let $D_{n}$  be the domain of each evaluation and $(a,b) \supset [0,1]$ be the maximal interval such that $\alpha_{\lambda}(F) \in \mathcal{H}_{0}$. Then, $$Z_{\gamma}^{n}(F)=   \Delta_{n} \left( \bigcup_{ \lambda \in (a,b)} E_{\gamma}^{n}(\alpha_{\lambda}(F))\right), $$ so
$\Delta_{n}$ is a differentiable  $C^{1}$ function on this set, because $E_{\gamma}^{n}(\alpha_{\lambda}(F)) \subset D_{n}$ and $\lambda \in (a,b)$.

From Section \ref{evaluationSection} we get the formula,
$$\Delta_{n}'(x)=\frac{1- \frac{d\alpha_{\lambda}(F)^{n}}{dx}}{\frac{d\alpha_{\lambda}(F)^{n}}{dx} \sum_{k=0}^{n-1} (\frac{d\alpha_{\lambda}(F)^{k+1}}{dx})^{-1}(H-F)(\alpha_{\lambda}(F)^{k})},
$$
where $\lambda=\Delta_{n}(x)$.

From Schwartz \cite{SCH}, Chap. III, Thm. 3.1, we know that, if $D \subset \mathbb{R}^{n}$ is an open, and $g: D \to \mathbb{R}^{n}$ is $C^1$, and $E \subseteq D$ is a mensurable subset, then, $g(E)$ is measurable, and
$\displaystyle m(g(E))\leq\int_{E} |J_{g}| dm,$
where $J_{g}$ is the Jacobian determinant of $g$.

So, using  $\frac{d\alpha_{\lambda}(F)^{n}}{dx} \cdot \big( \frac{d\alpha_{\lambda}(F)^{i+1}}{dx} \big)^{-1}= \frac{d\alpha_{\lambda}(F)^{n-i}}{dx}  \circ \alpha_{\lambda}(F)^{i+1}$, and applying this property to our case we get,
\begin{align*}
m(\{ \lambda(x) \, | \, x \in E_{\gamma}^{n}(\alpha_{\lambda}(F))\text{ and } \alpha_{\lambda}(F) \in \mathcal{H}_{0}\}) = & \\
m(\Delta_{n}(\bigcup_{ \lambda \in (a,b)} E_{\gamma}^{n}(\alpha_{\lambda}(F)))) & \leq  \\
\int_{\bigcup E_{\gamma}^{n}(\alpha_{\lambda}(F))} | \Delta_{n}'(x)| dm & =  \\
\int_{\bigcup E_{\gamma}^{n}(\alpha_{\lambda}(F))}  \frac{\left\|1- d_{x} \alpha_{\lambda}(F)^{n}(x)\right\|}{ \sum_{i=0}^{n-1}\frac{d\alpha_{\lambda}(F)^{n-i}}{dx}  \circ \alpha_{\lambda}(F)^{i+1} \cdot \left\|(H-F)(\alpha_{\lambda}(F)^{i})\right\|} dm & \leq  \\
\frac{\gamma}{\sigma } \int_{\bigcup E_{\gamma}^{n}(\alpha_{\lambda}(F))}  \frac{1}{ \sum_{i=0}^{n-1}\frac{d\alpha_{\lambda}(F)^{n-i}}{dx}  \circ \alpha_{\lambda}(F)^{i+1} } dm & =\frac{\gamma}{\sigma } I_{n}
\end{align*}
where $I_{n}= \int_{\bigcup E_{\gamma}^{n}(\alpha_{\lambda}(F))}  \frac{1}{\sum_{i=0}^{n-1}\frac{d\alpha_{\lambda}(F)^{n-i}}{dx}  \circ \alpha_{\lambda}(F)^{i+1}}  dm $.

Let,  $\displaystyle u=\min_{ x \in \bigcup E_{\gamma}^{n}(\alpha_{\lambda}(F)) } \frac{d}{dx}\alpha_{\lambda}(F) (x)$ then $$\sum_{i=0}^{n-1}\frac{d\alpha_{\lambda}(F)^{n-i}}{dx}  \circ \alpha_{\lambda}(F)^{i+1} \geq \sum_{i=0}^{n-1} u^{n-i+1}=b_{n}.$$
Thus,
$\frac{1}{\sum_{i=0}^{n-1} u^{n-i+1}}=\frac{1}{b_{n}} = c_{n} < \infty$.
\end{proof}

Finally, we would like to point out that if $u>1$ on the above theorem, then $\sum \frac{1}{b_{n}}$ is obviously finite, on the other hand, we always have $1-\gamma \leq u \leq 1+ \gamma$, thus $\sum \frac{1}{b_{n}}$ may diverge.

\vspace{0.9cm}

\end{document}